\theoremstyle{definition}
	\newtheorem{thm}{Theorem}[section]
	\newtheorem*{thm*}{Theorem}	\newtheorem{Def}[thm]{Definition}
	\newtheorem*{Def*}{Definition}
	\newtheorem{rmk}[thm]{Remark}
	\newtheorem*{rmk*}{Remark}
	\newtheorem{cor}[thm]{Corollary}
	\newtheorem*{cor*}{Corollary}
	\newtheorem{prop}[thm]{Proposition}
	\newtheorem*{prop*}{Proposition}
	\newtheorem{eg}[thm]{Example}
	\newtheorem*{eg*}{Example}
	\newtheorem{lem}[thm]{Lemma}
	\newtheorem*{lem*}{Lemma}
	\newtheorem*{ex*}{Exercise}
	\newtheorem*{claim*}{Claim}
	\newtheorem{fact}[thm]{Fact}
\theoremstyle{definition}
	\newtheorem{soln*}{Solution}
	\newtheorem*{note*}{Note}
	\newtheorem{question}[thm]{Question}
\newcommand{\conv}{\mathrm{conv}}
\newcommand{\st}{\mathrm{st}}
\newcommand{\RES}{\mathbf{RES}}
\newcommand{\VF}{\mathbf{VF}}
\newcommand{\VG}{\Gamma}
\newcommand{\Stab}{\text{Stab}}
\newcommand{\res}{\text{res}}
\newcommand{\val}{\mathrm{val}}
\title{A note on $\mu$-stabilizers in ACVF}
\author{Jinhe Ye}
\address{Institut de math\'ematiques de Jussieu – Paris Rive Gauche}
\email{jinhe.ye@imj-prg.fr}
\urladdr{https://sites.google.com/view/vincentye}
\date{}
\begin{document}
\maketitle
\begin{abstract}
 We study $\mu$-stabilizers for groups definable in ACVF in the valued field sort. We prove that $\Stab^\mu(p)$ is an infinite unbounded definable subgroup of $G$ when $p$ is standard and unbounded. In the particular case when $G$ is linear algebraic, we show that $\Stab^\mu(p)$ is a solvable algebraic subgroup of $G$, with $\dim(\Stab^\mu(p))=\dim(p)$ when $p$ is $\mu$-reduced and unbounded.
\end{abstract}
\author
\section{Introduction}
In \cite{Kobi}, working in an o-minimal theory, given a definable group $G$ and an unbounded definable curve $C\subseteq G$, Peterzil and Steinhorn canonically associated to $C$ a one-dimensional definable subgroup $H_C \subseteq G$. Intuitively, this group is the ``limit of the tangent space" towards infinity along the curve. In fact, such groups depend only on the type $p$ of a branch of the curve at infinity.  Subsequently, in \cite{Ser}, Peterzil and Starchenko developed a general machinery to study definable topological group actions, with an emphasis on groups definable in o-minimal theories. $H_C$ above is exactly $\mu$-stabilizer of the type $p$ of a branch of the curve at infinity in this language. The group action $G$ on its space of $\mu$-types is a model-theoretic analogue of the Samuel compactification of $G$ (See ~\cite{sam}). Following their approach, several generalizations have been made to the purely algebraic setting~\cite{muACF} and the $p$-adic setting~\cite{WY_padic}. In our recent work \cite{muACF} with Kamensky and Starchenko, we generalized the notion of $\mu$-stabilizers to the context of linear algebraic groups over trivially valued algebraically closed fields and analyzed the dimension and structure of such groups, despite the fact that the Zariski topology is too coarse to give a meaningful notion of the ``infinitesimal neighborhood" around the identity. Nonetheless, the $\mu$-stabilizers in this setting is canonical in the following sense: Let $X$ be a $G$-variety with a $G$-equivariant embedding of $G$ into $X$, and $p$ be a $G$-type. If $X$ contains a realization $c$ of $p$, then $\Stab^\mu(p)$ is contained in the stabilizer of $c$ under the $G$-action on $X$. See~\cite[Remark 3.4]{muACF} for details. Particularly, we prove the following, the precise meaning of the terms will appear in Section~\ref{sec:lin-alg}:
\begin{fact}[{\cite[Theorem 1.2]{muACF}}]\label{fact:ksy}
  Let \(G\) be a linear algebraic group defined over an algebraically closed field \(k\), considered as a trivially valued field embedded in an algebraically closed valued field. Let \(p\in{}S_G(k)\) be an ACVF type that is residually algebraic and centered at infinity, then \(\Stab^\mu(p)\) is 
  infinite.
  
  Furthermore, if \(p\) is \(\mu\)-reduced, let \(V\) denote the Zariski 
  closure of \(p\) over \(k\). Then \(\dim(\Stab^\mu(p))=\dim V=\dim{}p\).  
  Moreover, for each type \(p\in{}S_G(k)\), \(\Stab^\mu(p)\) is a solvable 
  linear algebraic group.
\end{fact}
Further more, as illustrated in~\cite{muACF}, the space of $\mu$-types are naturally associated with the Zariski-Riemann space of $G$ and is a universal compactification of $G$ in an appropriate sense. We hope the study of $\mu$-stabilizers can help us understand the Zariski-Riemann space and definable compactifications of $G$.

It is therefore natural to ask if such a machinery can be developed in the context of groups $G$ defined over an algebraically closed valued field $F$, equipped with the valuation topology. The intersection of all the $F$-definable (in the 3-sorted language of ACVF) neighborhoods around the group identity gives rise to a natural ``infinitesimal neighborhood". Hence the setting resembles more of \cite{Ser} than \cite{muACF}. However, the techniques in \cite{muACF} still apply. In particular, the correspondence between definably connected components and types in the orbit of a $\mu$-reduced type remains present. From this, we obtain the main results of the paper. Before stating the results, let us briefly describe the meaning of the terminologies involved. ``Unbounded" means that the type does not concentrate on a bounded ball (see Definition~\ref{def:unbounded}), an analogue of unbounded in the o-minimal counterpart, or centered at infinity in the valuation-theoretic terminology. And ``standard" (see Definition~\ref{def:standard}) is the analogue of residually algebraic in~\cite{muACF}.

The above connections are made precise in Proposition~\ref{prop:res-alg}. In contrast to stably dominated types that have been studied extensively in~\cite{HHM,HL}, standard types are controlled by $\Gamma$. Guided by the intuition that algebraically closed valued field (ACVF) is governed by the residue field (stable) sort and the $\Gamma$-sort, we believe that understanding standard types together with the stably dominated types would shed some light on all definable types in ACVF. Lastly, ``$v$+$g$-continuous" (see Definition~\ref{def:v+g-cont}) is the same as continuous when viewed as functions on the model-theoretic Berkovich analytification using stably dominated types~\cite{HL}.

\begin{thm}\label{thm:main1}
Let $G$ be a definable group in $\VF^n$ over an algebraically closed valued field $F$ considered as constants in the $\VF$-sort. Assume further that $G$ is closed with respect to the valuation topology and the group operations on $G$ are continuous. For a unbounded standard $G$-type $p$ over $F$, $\Stab^\mu(p)$ is a definable subgroup of $G$. Moreover, if $G$ is $g$-closed and the group operations are $g$-continuous, $\Stab^\mu(p)$ is unbounded and infinite.
\end{thm}
\begin{rmk}\label{rmk:lin-alg}
For a general definable group $G$, it is hard to study the dimension of the stabilizers. However, when $G$ is linear algebraic, we have that $\Stab^\mu(p)\subseteq G$ is a solvable algebraic subgroup, and $\dim(\Stab^\mu(p))=\dim(p)$ for $\mu$-reduced $p$ by reducing to \cite[Theorem 1.2]{muACF}. See Section~\ref{sec:lin-alg} for details.

\end{rmk}
The paper is structured as the following: In Section~\ref{sec:mu}, we set up the language of $\mu$-types and their stabilizers in ACVF. In Section~\ref{sec:standard-part}, we relate the $\mu$-stabilizers for standard types with an actual ``standard" part map (See Definition~\ref{def:standard}). In Section~\ref{sec:tame-top}, we gather the ingredients in~\cite{HL} needed. Section~\ref{sec:proof} is dedicated to the proof of the main results with an adaptation of the techniques in~\cite{muACF}.
\paragraph{\textbf{Acknowledgements.}}
We would like to thank Antoine Ducros, Fran\c cois Loeser, Kobi Peterzil and Sergei Starchenko for their helpful conversations. The author was partially supported by NSF research grant DMS1500671 and Fran\c cois Loeser's Institut Universitaire de France grant. We thank the anonymous referee for numerous comments that helped improving the paper greatly.
\section{\texorpdfstring{$\mu$}{mu}-types in ACVF}
\subsection{Definable group actions in ACVF}\label{sec:mu}
In this section, we develop the language of $\mu$-types and their stabilizers in an algebraically closed valued field. Most of the facts stated below are results in \cite{Ser} or \cite{muACF} adapted to our setting. Since we are working in a model of ACVF, essentially all the definitions are from \cite{Ser} translated into the valuative context. We include them for the sake of completeness. \textbf{From this section on, the underlying theory is fixed to be a completion of the theory of algebraically closed valued fields (ACVF).} For general facts concerning ACVF, we refer the readers to \cite{Lou}. We work in $\mathcal{L}_\val$, the usual 3-sorted language of ACVF. We use $\VF$ to denote the valued field sort and $\Gamma$ to denote the sort for the value group without the point $\infty=v(0)$. 
\begin{Def}
Let $G$ be a group defined in the $\VF$-sorts over some $F\models \mathrm{ACVF}$. By a \textbf{$G$-formula} $\phi(x)$, we mean a formula $\phi(x)$ such that $\phi(x)\models x\in G$. For a (small) set of parameters $A$, we use $\mathcal{L}_G(A)$ to denote the set of $G$-formulas over $A$. Lastly, by 
\textbf{a (partial) $G$-type}, we mean a small set of consistent $G$-formulas. We use $S_G(F)$ to denote the set of complete $G$-types over $F$.
\end{Def}
\begin{Def}
For $G$ a definable group as above, let $\varphi(x),\psi(x)$ be $G$-formulas, we use $\varphi\cdot \psi(x)$ to denote the formula:
\begin{align*}\exists x_1\exists x_2( \varphi(x_1)\wedge\psi(x_2)\wedge x=x_1\cdot x_2).
\end{align*}
Similarly for partial $G$-types $\Sigma(x),\Pi(x)$,
$\Sigma \cdot \Pi(x)$ is the following partial type:
\begin{align*}
    \{\varphi\cdot \psi(x):\varphi \in \Sigma,\, \psi \in \Pi\}.
\end{align*}
\end{Def}
Note that $G(F)$ acts on the $G$-formulas and $G$-types as well via the following: $g\cdot \varphi(x)$ is defined to be $(x=g)\cdot \varphi$. The set it defines is the same as the set obtained by multiplying $g$ by the set defined by $\varphi(x)$. The action on $G$-types is defined similarly.

\begin{Def}
Let $\Sigma(x)$ be a partial $G$-type over a small set $A$. We define $\Stab(\Sigma)$ to be 
\begin{align*}
    \{g \in G(F): \text{ for all } \phi \in \mathcal{L}_G(A), \Sigma \models \phi \text{ iff } \Sigma\models g\cdot \phi \}.
\end{align*}

For each $\phi \in \mathcal{L}_G(A)$, let $B_{\Sigma,\phi}$ be the set 
\[
\{g \in G(F): \Sigma(x) \models g\cdot \phi\}.
\]
We define $\Stab_\phi(\Sigma)=\Stab(B_{\Sigma,\phi})=\{g\in G(F): g\cdot B_{\Sigma,\phi}=B_{\Sigma,\phi}\}$, it follows immediately that it is a subgroup of $G(F)$.
\end{Def}
\begin{rmk}
Note that we can similarly develop the notion of right stabilizers by considering $G$ acting on the right. The construction in the paper later on can be modified accordingly for the right action. However,  the left and right stabilizers of the same type may not agree in general, see Example~\ref{ex:basic1} and~\ref{ex:basic2}.
\end{rmk}
The following is immediate.
\begin{fact}
$\Stab(\Sigma)=\bigcap_{\phi \in \mathcal{L}_G(A)} \Stab_\phi(\Sigma)= \bigcap_{\Sigma\models \phi}\Stab_\phi(\Sigma)$.
\end{fact}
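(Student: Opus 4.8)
The plan is to reduce every clause to a membership condition in the sets $B_\phi$ and then to run a double inclusion for each of the two equalities. The starting observations are: for $g\in G(F)$ and $\phi\in\mathcal{L}_G(A)$ one has $g\in B_\phi$ iff $\Sigma\models g\cdot\phi$, so in particular $e\in B_\phi$ iff $\Sigma\models\phi$; and since the action of $G(F)$ on $G$-formulas is a left action, $k\in B_{g\cdot\phi}\Leftrightarrow\Sigma\models(kg)\cdot\phi\Leftrightarrow kg\in B_\phi$, giving the translation identity $B_{g\cdot\phi}=B_\phi\, g^{-1}$. Unwinding $\Stab_\phi(\Sigma)=\Stab(B_\phi)$, an element $h$ lies in $\Stab_\phi(\Sigma)$ exactly when $hB_\phi=B_\phi$, i.e. when $k\in B_\phi\Leftrightarrow hk\in B_\phi$ for every $k\in G(F)$. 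Throughout I will use that $\mathcal{L}_G(A)$ is invariant under translation by $G(F)$, so that $g\cdot\phi$ again ranges over $\mathcal{L}_G(A)$; this is the natural hypothesis here, since $G$ and the relevant elements $g\in G(F)$ are available as parameters.

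For the first equality I would prove both inclusions through the reformulation that $h\in\Stab(\Sigma)$ iff $h$ and $e$ belong to exactly the same sets $B_\phi$. For $\supseteq$: if $hB_\phi=B_\phi$ for all $\phi$, then testing at $k=e$ gives $e\in B_\phi\Leftrightarrow h\in B_\phi$, which is precisely membership in $\Stab(\Sigma)$. For $\subseteq$: given $h\in\Stab(\Sigma)$ and a fixed $\phi$, I would apply the defining biconditional of $\Stab(\Sigma)$ to the translated formula $k\cdot\phi$ (again in $\mathcal{L}_G(A)$), obtaining $\Sigma\models k\cdot\phi\Leftrightarrow\Sigma\models h\cdot(k\cdot\phi)=(hk)\cdot\phi$, that is $k\in B_\phi\Leftrightarrow hk\in B_\phi$ for every $k$, whence $hB_\phi=B_\phi$.

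For the second equality the inclusion $\subseteq$ is immediate, since the right-hand intersection ranges over a subfamily. The substance is $\supseteq$: assuming $h$ stabilizes $B_\phi$ for every $\phi$ with $\Sigma\models\phi$, I must show $hB_\psi=B_\psi$ for an arbitrary $\psi$. Fix $k$ and suppose first $k\in B_\psi$, i.e. $\Sigma\models\chi$ for $\chi:=k\cdot\psi$. Since $\Sigma\models\chi$, the hypothesis gives $h\in\Stab(B_\chi)$, so from $e\in B_\chi$ we get $h\in B_\chi$, i.e. $\Sigma\models h\cdot\chi=(hk)\cdot\psi$, that is $hk\in B_\psi$. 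For the converse I would exploit that $\Stab(B_\chi)$ is a group: if $hk\in B_\psi$ then $\chi':=(hk)\cdot\psi$ satisfies $\Sigma\models\chi'$, so $h^{-1}\in\Stab(B_{\chi'})$, and applying this at $e\in B_{\chi'}$ yields $h^{-1}\in B_{\chi'}$, i.e. $\Sigma\models h^{-1}\cdot\chi'=k\cdot\psi$, so $k\in B_\psi$. Hence $k\in B_\psi\Leftrightarrow hk\in B_\psi$ for all $k$, giving $hB_\psi=B_\psi$.

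I expect the main obstacle to be this nontrivial inclusion of the second equality, where one must transport information from the $\phi$ actually entailed by $\Sigma$ to all $\phi$; the device that makes it go through is the combination of the translation identity $B_{g\cdot\phi}=B_\phi\, g^{-1}$ with the fact, recorded just above the statement, that each $\Stab_\phi(\Sigma)$ is a group and hence closed under inversion. A secondary point to keep straight is the bookkeeping between the left action on formulas and the resulting right translation on the sets $B_\phi$, together with the tacit assumption that $\mathcal{L}_G(A)$ is stable under translation by elements of $G(F)$.
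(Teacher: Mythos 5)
Your proposal is correct, and it is exactly the direct verification that the paper leaves implicit: the paper states this fact without proof, calling it immediate, and the intended argument is precisely your unwinding via the sets $B_\phi$, the translation identity, and the group property of each $\Stab_\phi(\Sigma)$. Your explicitly flagged hypothesis that $\mathcal{L}_G(A)$ be closed under translation by $G(F)$ is well spotted and genuinely necessary -- without it $\Stab(\Sigma)$ need not even be a subgroup of $G(F)$, while both intersections always are, so the stated equalities can fail -- but it holds in every application the paper makes of the fact, since there $A=F$ and the acting elements lie in $G(F)$.
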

We say a (partial) type $\Sigma(x)$ over $F$ is \textbf{definable} if for each $\phi(x;y)$, there is $\psi(y)\in \mathcal{L}_\val(F)$ such that $ \Sigma\models \phi(x;c)$ iff $ F\models \psi(c)$.
We have the following.
\begin{fact}[{\cite[Proposition 2.13]{Ser}}]\label{fact:Stab-def}
Let $\Sigma(x)$ be a partial definable $G$-type over $F$ of ACVF, $\Stab(\Sigma)$ is type-definable over $F$.
\end{fact}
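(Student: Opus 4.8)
The plan is to leverage the decomposition $\Stab(\Sigma)=\bigcap_{\phi\in\mathcal{L}_G(F)}\Stab_\phi(\Sigma)$ supplied by the preceding Fact, together with the observation that type-definability over $F$ means precisely being an intersection of $F$-definable sets. Thus it suffices to prove that each individual subgroup $\Stab_\phi(\Sigma)$ is $F$-definable; the (infinite) intersection then automatically yields a type-definable set. By the definition $\Stab_\phi(\Sigma)=\Stab(B_\phi)$, this reduces to showing that the set $B_\phi=\{g\in G(F):\Sigma\models g\cdot\phi\}$ is $F$-definable.

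The key step is to make the dependence of $g\cdot\phi$ on $g$ uniform. Writing $\phi(x)=\phi_0(x;c)$ with $c$ a tuple from $F$, the translate $g\cdot\phi$ defines the set $g\cdot[\phi]$, so $g\cdot\phi(x)$ is equivalent to $\phi_0(g^{-1}\cdot x;c)$. Because the group operation and inversion on $G$ are definable functions, $g^{-1}\cdot x$ is a definable function of $(g,x)$, and hence $g\cdot\phi(x)$ is a single instance $\Theta(x;g,c)$ of the parametrized $\mathcal{L}_\val$-formula $\Theta(x;u,y):=\phi_0(u^{-1}\cdot x;y)$. Now I invoke definability of $\Sigma$ applied to $\Theta$: there is $\psi_\Theta(u,y)\in\mathcal{L}_\val(F)$ with $\Theta(x;g,c)\in\Sigma$ iff $F\models\psi_\Theta(g,c)$. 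Consequently $B_\phi=\{g\in G(F):F\models\psi_\Theta(g,c)\}$ is cut out by an $\mathcal{L}_\val(F)$-formula, i.e.\ is $F$-definable.

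Once $B_\phi$ is known to be $F$-definable, its setwise stabilizer $\Stab(B_\phi)=\{h\in G(F):h\cdot B_\phi=B_\phi\}$ is $F$-definable as well: the condition $h\cdot B_\phi=B_\phi$ is expressed by $\forall y\,(y\in B_\phi\leftrightarrow h^{-1}\cdot y\in B_\phi)$, a first-order condition on $h$ over the same parameters, using once more that the action is definable. Therefore each $\Stab_\phi(\Sigma)$ is an $F$-definable subgroup, and $\Stab(\Sigma)=\bigcap_\phi\Stab_\phi(\Sigma)$ is type-definable over $F$, as claimed.

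I expect the only real subtlety to be the bookkeeping in the key step, namely certifying that the translated formula $g\cdot\phi$ is genuinely a single instance of one parametrized formula $\Theta$, so that the defining scheme of $\Sigma$ produces one $\mathcal{L}_\val(F)$-formula $\psi_\Theta$ valid uniformly in $g$; this rests entirely on the standing hypothesis that multiplication and inversion on $G$ are definable maps. A second, minor point to reconcile is that definability of $\Sigma$ is phrased via membership $\Theta\in\Sigma$ whereas $\Stab(\Sigma)$ is phrased via consequence $\Sigma\models\Theta$; one handles this by passing to the deductive closure of $\Sigma$ within $\mathcal{L}_G(F)$, on which the two notions coincide, so the defining scheme still applies.
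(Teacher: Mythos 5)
Your proof is correct and is essentially the argument the paper intends: the paper deduces this fact directly from the decomposition $\Stab(\Sigma)=\bigcap_{\phi}\Stab_\phi(\Sigma)$ of the preceding fact, with each $\Stab_\phi(\Sigma)$ a definable subgroup because definability of $\Sigma$ makes each $B_\phi$ definable via the uniform formula trick, exactly as you do. One small caution: your closing device of ``passing to the deductive closure'' is circular if the paper's membership-style definition of a definable partial type is read literally, since membership-definability of the deductive closure is precisely consequence-definability of $\Sigma$, which is what needed proving; the intended reading (as in the source cited for this fact) is that definability is stated for the consequence relation $\Sigma\models\phi(x,c)$, under which your argument needs no repair.
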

\textbf{From now on, $F$ is a given algebraically closed valued field embedded in the $\VF$-sort, and $G$ is a definable group over $F$ in the valued field sort and $e$ is the group identity. We assume throughout that for some $n\in\mathbb{N}$, $G\subseteq \VF^n$ is closed in the valuation topology and the group operations are continuous with respect to the valuation topology.} $G(F)$ acts on the space $S_G(F)$ by left multiplication. As in \cite{Ser}, there is a canonical $F$-type-definable subgroup of $G$, denoted by $\mu$, given by the intersection of all the $F$-definable (valuative) neighborhoods of $e$ in $G$, where $e$ denotes the group identity. It follows immediately that for any $g\in G(F)$, $g\cdot \mu\cdot g^{-1}=\mu$.

For each tuple of $\VF$-variables $x=(x_1,...,x_n)$, we use $B_{>\gamma,x}$ to denote the $n$-dimensional ball/polydisc $\{y:v(y_i-x_i)>\gamma\text{ for } i=1,...,n\}$, and $B_{\geq\gamma,x}$ is defined similarly. For $g\in G$, $G_{>\gamma,g}$ is used to denote $B_{>\gamma,g}\cap G$ and similarly for $G_{\geq\gamma,g}$. We use $\nu$ and $\nu^n$ to denote the intersection of all $F$-definable neighborhoods of the origin in $\VF$ and $\VF^n$ respectively. In this notation, $\mu=(e+\nu^n)\cap G$.

The group $\mu$ induces the following equivalence relation on $S_G(F)$: $p\sim_\mu q$ if there is $a\models p$, $b\models q$ such that there is $\epsilon \models \mu$ with $a=\epsilon\cdot b$. In other words, $p\sim_\mu q$ iff $\mu\cdot p=\mu\cdot q$. We use $p_\mu$ to denote the equivalence class of $p$, and refer to it as the \textbf{$\mu$-type} of $p$. $S^\mu_G(F)$ denotes the quotient of $S_G(F)$ under the above equivalence relation. The $G(F)$-action on $S_G(F)$ respects the equivalence relation. Hence we have an induced $G(F)$-action on $S^\mu_G(F)$.
For $p \in S_G(F)$, we use $\Stab^\mu(p)$ to denote $\Stab(\mu\cdot p)$. Clearly, it is also the stabilizer of $p_\mu$ under the induced $G(F)$-action on $S^\mu_G(F)$. More precisely, $\Stab^\mu(p)$ consists of the set of elements $g\in G(F)$ such that for any $a\models p$, there is $\epsilon \models \mu$ and $b\models p$ with $g\cdot a=\epsilon\cdot b$.

\begin{fact}[{\cite[Claim 2.15]{Ser}}]\label{fact:def}
Let $p \in S_G(F)$ be definable over $F$. Then $\mu\cdot p$ is definable over $F$ as a partial type.
\end{fact}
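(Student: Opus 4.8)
The plan is to reduce the question of whether $\mu\cdot p\models\phi(x,c)$ (equivalently, membership of $\phi(x,c)$ in the deductive closure of the partial type $\mu\cdot p$) to membership in $p$ of a single uniformly definable family of formulas, and then to invoke the definability of $p$ together with one existential quantifier over $\Gamma$ to collapse the resulting a priori infinite disjunction into a single formula.

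First I would fix a uniformly $F$-definable, directed neighbourhood basis of the identity. Writing $e$ for the identity of $G$, the polydiscs $U(x,w):=\bigl(x\in G\wedge\bigwedge_{j=1}^n v(x_j-e_j)>w\bigr)$, for $w$ ranging over $\Gamma$, form such a basis; since they are cofinal among all $F$-definable neighbourhoods of $e$ (any such neighbourhood contains some $U(\cdot,\gamma)$ with $\gamma\in\Gamma(F)$, by o-minimality of $\Gamma$), we have $\mu=\bigcap_{\gamma\in\Gamma(F)}U(\cdot,\gamma)$. I would also record the standard fact, proved by compactness using the directedness of this family and the completeness of $p$, that the realizations of $\mu\cdot p$ are exactly the products $\epsilon\cdot a$ with $\epsilon\models\mu$ and $a\models p$; consequently $\mu\cdot p\models\phi(x,c)$ iff $\phi(\epsilon\cdot a,c)$ holds for all $\epsilon\models\mu$ and all $a\models p$.

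The central step is the equivalence
\begin{align*}
\mu\cdot p\models\phi(x,c)\iff \exists\gamma\in\Gamma(F):\ \theta(x,c,\gamma)\in p,\quad\text{where}\quad \theta(x,y,w):=\forall\epsilon\bigl(U(\epsilon,w)\to\phi(\epsilon\cdot x,y)\bigr).
\end{align*}
The direction from right to left is immediate: if $\theta(x,c,\gamma)\in p$ and $b=\epsilon a$ with $\epsilon\models\mu\subseteq U(\cdot,\gamma)$ and $a\models p$, then $a\models\theta(x,c,\gamma)$ forces $\phi(\epsilon a,c)$. For the converse I would argue contrapositively: if $\theta(x,c,\gamma)\notin p$ for every $\gamma$, then by completeness $p\models\exists\epsilon\,(U(\epsilon,\gamma)\wedge\neg\phi(\epsilon x,c))$ for each $\gamma$, so fixing $a\models p$ the partial type $\{U(\epsilon,\gamma):\gamma\in\Gamma(F)\}\cup\{\neg\phi(\epsilon\cdot a,c)\}$ in $\epsilon$ is finitely satisfiable by directedness, hence realized by some $\epsilon\models\mu$ in a saturated model; then $\epsilon a\models\mu\cdot p$ witnesses $\mu\cdot p\not\models\phi(x,c)$.

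It then remains to observe that the right-hand side is definable. Since $\theta(x,y,w)$ is a single formula, the definability of $p$ supplies $d_p\theta(y,w)\in\mathcal{L}_\val(F)$ with $\theta(x,c,\gamma)\in p\iff F\models d_p\theta(c,\gamma)$; because $F$ is a model, $\exists\gamma\in\Gamma(F)\,[\theta(x,c,\gamma)\in p]$ holds iff $F\models\exists w\,(w\in\Gamma\wedge d_p\theta(c,w))$, so $\psi(y):=\exists w\,(w\in\Gamma\wedge d_p\theta(y,w))$ is the desired definition of $\mu\cdot p$. I expect the main obstacle to lie in the central equivalence, and specifically in ensuring that the neighbourhood basis can be taken \emph{uniformly} definable and directed: this is precisely what lets the infinite disjunction $\bigvee_{\gamma}d_p\theta(y,\gamma)$ be rewritten as the single existential quantification over $\Gamma$, and it is the one place where the valuative structure (the availability of $\Gamma$ as a definable index sort) does real work that has no analogue in the purely o-minimal or Zariski settings.
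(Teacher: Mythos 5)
The paper itself offers no proof of this statement: it is imported as a black-box Fact from \cite[Claim 2.15]{Ser}, so there is no in-paper argument to compare against. Your proposal is correct, and it is essentially the standard argument behind that claim, transposed to ACVF: replace $\mu$ by the directed, cofinal family of definable polydisc neighbourhoods $G_{>\gamma,e}$ indexed by $\gamma\in\Gamma(F)$, show that $\mu\cdot p\models\phi(x,c)$ holds iff $\theta(x,c,\gamma)\in p$ for some $\gamma\in\Gamma(F)$ (the right-to-left direction being syntactic, the left-to-right by a compactness/saturation argument using directedness), and then convert the infinite disjunction over $\gamma$ into a single existential quantifier over the sort $\Gamma$ using the $\theta$-definition of $p$ and the fact that $F$ is a model, so existential witnesses in $\Gamma$ can be taken in $\Gamma(F)$. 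One minor repair: the cofinality of the polydiscs among $F$-definable neighbourhoods of $e$ is not a matter of o-minimality of $\Gamma$; it follows from elementarity $F\preceq\mathbb{U}$ --- the set $\{w\in\Gamma: G_{>w,e}\subseteq N\}$ is $F$-definable and nonempty, hence meets $\Gamma(F)$. With that justification substituted, the argument is complete.
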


The above facts show that $\Stab^\mu(p)$ is a type-definable subgroup by~\cite[Proposition 2.13]{Ser}. Unlike the o-minimal or algebraic counterpart, ACVF does not have descending chain condition for definable groups. Hence, a priori, it is not clear if $\Stab^\mu(p)$ is definable, even for definable types $p$.

We recall one last fact from \cite{Ser}.
\begin{fact}[{\cite[Claim 2.18]{Ser}}]\label{fact:conj}
Let $g \in G(F)$ and $p,q\in S_G(F)$ be such that $g\cdot p_\mu=q_\mu$, then $\Stab^\mu(q)=g\Stab^\mu(p)g^{-1}$.
\end{fact}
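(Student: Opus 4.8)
The plan is to recognize this as the standard conjugation identity for point-stabilizers of a group action, applied to the induced $G(F)$-action on the quotient $S^\mu_G(F)$. The key reduction, already recorded above, is that $\Stab^\mu(p)$ is nothing but the stabilizer of the single point $p_\mu$ for this action; under this identification the hypothesis $g\cdot p_\mu=q_\mu$ is an honest equality of points of $S^\mu_G(F)$, and the whole statement becomes an abstract orbit computation in which the specific nature of $\mu$-types plays no further role.

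First I would prove the inclusion $g\,\Stab^\mu(p)\,g^{-1}\subseteq \Stab^\mu(q)$. Given $h\in\Stab^\mu(p)$, so that $h\cdot p_\mu=p_\mu$, I would use that the $G(F)$-action descends to $S^\mu_G(F)$ (established just before the statement) to compute
\begin{align*}
(ghg^{-1})\cdot q_\mu=(ghg^{-1})\cdot(g\cdot p_\mu)=(gh)\cdot p_\mu=g\cdot(h\cdot p_\mu)=g\cdot p_\mu=q_\mu,
\end{align*}
whence $ghg^{-1}\in\Stab^\mu(q)$, as desired.

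For the reverse inclusion I would run the identical argument with the roles of $p$ and $q$ interchanged: from $g\cdot p_\mu=q_\mu$ one has $g^{-1}\cdot q_\mu=p_\mu$, so replacing $g$ by $g^{-1}$ in the display above gives $g^{-1}\,\Stab^\mu(q)\,g\subseteq \Stab^\mu(p)$, that is $\Stab^\mu(q)\subseteq g\,\Stab^\mu(p)\,g^{-1}$. Combining the two inclusions yields the claimed equality. I do not expect any genuine obstacle here: the single nontrivial input is that multiplication by elements of $G(F)$ is well-defined on $\mu$-types, which is exactly the compatibility of the $G(F)$-action with $\sim_\mu$ noted above, and everything else is the associativity and inverse axioms of the action.
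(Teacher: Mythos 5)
Your proposal is correct and is exactly what the paper has in mind: the paper imports this statement from \cite{Ser} with the remark that ``the proof is just elementary group theory,'' and your conjugation computation on the induced $G(F)$-action on $S^\mu_G(F)$ is precisely that elementary argument, using the identification of $\Stab^\mu(p)$ with the point-stabilizer of $p_\mu$ that the paper records immediately beforehand. No gap: the well-definedness of the action on $\mu$-classes, which your argument needs, is exactly the compatibility statement the paper asserts when it introduces $S^\mu_G(F)$.
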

Lastly, we introduce the following terminology.
\begin{Def}
We say $p\in S_G(F)$ is \textbf{$\mu$-close} to a definable set $X$ if $\models\mu\cdot p\cap X\neq \emptyset$.
\end{Def}

\subsection{Computing \texorpdfstring{$\mu$}{mu}-stabilizers via standard parts}\label{sec:standard-part}
In this section, we will compute the $\mu$-stabilizers of $p$ using the standard part map.

Working in $\mathbb{U}$, a monster model of ACVF extending $F$, we have the following statement analogous to~\cite[Claim 2.22]{Ser}.
\begin{prop}\label{prop:st}
Let $p \in S_G(F)$, then for any $a \models p$,
\begin{equation*}
     \Stab^\mu(p)=(\mu\cdot p(\mathbb{U}))\cdot (\mu\cdot p(\mathbb{U}))^{-1} \cap F^n=\mu \cdot p(\mathbb{U})\cdot a^{-1}\cap F^n.
\end{equation*}
\end{prop}
\begin{proof}
Let us begin by proving the second equality 
\begin{equation*}
    (\mu\cdot p(\mathbb{U}))\cdot (\mu\cdot p(\mathbb{U}))^{-1} \cap F^n=\mu \cdot p(\mathbb{U})\cdot a^{-1}\cap F^n.
\end{equation*} 
Clearly $\mathbf{RHS}\subseteq \mathbf{LHS}$ by definition. For the reverse containment, let $h=\epsilon_1\cdot c\cdot b^{-1}\cdot \epsilon_2 \in \mathbf{LHS}$ for some $b,c \in p(\mathbb{U})$. Since $h \in G(F)$ and $g\cdot \mu\cdot g^{-1}=\mu$ for any $g \in G(F)$, so $h\cdot{\epsilon_2}^{-1}=\epsilon_3\cdot h$. Replacing $\epsilon_1$ by $\epsilon_3^{-1}\cdot \epsilon_1$ we may assume that $\epsilon_2=e$, the group identity. Take an automorphism $\sigma$ of $\mathbb{U}$ over $F$ such that $\sigma(b)=a$, we have $h=\sigma(h)=\sigma(\epsilon_1)\cdot \sigma(c)\cdot a^{-1}$.

Next we show $\Stab^\mu(p)=\mu \cdot p(\mathbb{U})\cdot a^{-1}\cap F^n$. Clearly, if $g \in \Stab^\mu(p)$, we have that $g\cdot a \models \mu\cdot p$. Conversely, if $g\in G(F) \text{ and } g\cdot a\in \mu\cdot p(\mathbb{U})$, then for any $b \models p$ $g\cdot b \in\mu\cdot p(\mathbb{U})$. Hence $g \in \Stab^\mu(p)$.
\end{proof}
However, the above description is not so helpful in practice. To show our main theorem, we wish to realize $\Stab^\mu(p)$ via some definable objects as in~\cite[Section 3.2]{Ser}. In \cite{muACF}, the class of residually algebraic types was isolated for this purpose and their $\mu$-stabilizers can be described via a standard part map. This motivates us to define the following analogue of residually algebraic types in ACVF. Namely, they are the types for which a canonical ``standard part" map to $F$ can be defined. We changed our terminology from ``residually algebraic" to avoid potential ambiguity, since $F$ is equipped with a well-defined residue field. 

Let $K$ be an algebraically closed field (properly) extending $F$, satisfying the following property: 
\begin{equation}\label{eqn:standard}
\forall b \in K ((\exists a \in F\,\, v(b)>v(a))\Rightarrow (\exists a \in F\, \forall  c \in F^\times \,\, v(b-a)>v(c))).
\end{equation}
 If $b \in K$ satisfies $v(b)>v(a)$ for some $a \in F$, ~(\ref{eqn:standard}) implies that there is a unique $b' \in F$ such that $v(b-b')>\Gamma(F)$. We use the $\st$ to denote the above map $b\mapsto b'$. We use $\conv_F(x)$ to denote the following $F$-$\bigvee$-definable set,
\[
\bigcup_{c\in F} \{x: v(x)>v(c)\}.
\]
For any $K$ satisfying~(\ref{eqn:standard}), $\conv_F(K)$ is a valuation subring of $K$ with maximal ideal $\nu(K)$ defined by
\[
\nu(K)=\{x\in K: v(x)>\Gamma(F)\}.
\]
Moreover, on $\conv_F(K)$, $\st$ is exactly the residue map of $\conv_F(K)$ and $F$ is an embedded residue field. Abusing notation, the map $(\conv_F(K))^n\to F^n$, $(a_1,...,a_n)\mapsto (\st(a_1),...,\st(a_n))$ will also be denoted by $\st$.

\begin{Def}\label{def:standard}
We will call extensions $F\preceq K$ satisfying~(\ref{eqn:standard}) \textbf{standard}. We say a type $p\in S_n(F)$ is \textbf{standard} if it is realized in a standard extension $K$ of $F$. 
\end{Def}
Note that standard extensions of algebraically closed valued fields always exist. Given $F$ a model, one can take any element $b$ in the monster model such that $\infty\neq v(b)>\Gamma(F)$ and consider $K=\mathrm{acl}(F(b))$. 
\begin{rmk}\label{rmk:standard}
The name standard comes from the similarity of $\st$ and the standard part map in relatively Dedekind complete pairs of o-minimal structures~\cite{vdd-lewen-1,Pillay_def}. By \cite[Lemma 5.11]{pair-pro}, the condition $K/F$ is standard implies that $K/F$ is separated or vs-defectless in other terminologies. Recall that $K/F$ is separated or vs-defectless means the following: For any $V\subseteq K$ some $n$-dimensional vector space over $F$, there is a basis $c_1,\ldots,c_n\in V$ such that $v(\sum_i c_ia_i)=\min_i\{v(c_ia_i)\}$ for every $a_i\in F$. It also follows immediately from~(\ref{eqn:standard}) that $\Gamma(F)$ is a convex subgroup of $\Gamma(K)$ and $\res(K)=\res(F)$. Hence by \cite[Theorem 1,9]{delon}, for any tuple $a \in K$, $tp(a/F)$ is definable over $F$. 
\end{rmk}
\begin{rmk}\label{rmk:bp}
By the previous remark, a standard extension $F\preceq K$ is separated, $\Gamma(F)$ is a convex subgroup of $\Gamma(K)$ and $\res(K)=\res(F)$.
Thus they correspond exactly to the completion of separated pairs of ACVF in~\cite[7.4.2(7)]{BP}. Note that $\st$ is clearly definable in the language of pairs of ACVF in~\cite{BP}, where separated pairs of ACVF are considered with a predicate for $F$ in $K$. By~\cite[Theorem A and C]{BP}, $F$ is stably embedded as a pure ACVF. In particular, the following holds: 
Let $X\subseteq \VF^n$ be a $K$-definable subset, $\st(X(K)\cap (\conv_F(K))^n)$ is a definable subset of $F$ (in ACVF).
\end{rmk}
As in~\cite[Section 3.2]{Ser}, we have the following:
\begin{lem}\label{lem:nbhd}
Let $G$ be a $F$-definable group in the valued field sort and $F\preceq K\models \mathrm{ACVF}$ a standard extension. Then $\st(G(\conv_F(K))= G(F)$. We also have that for $g\in G(F)$, $\mu(K)\cdot g=(g+\nu^n(K))\cap G$. 
\end{lem}
\begin{proof}
We first show that given $g \in G(\conv_F(K))$, $\st(g)\in G(F)$. Assume to the contrary that $\st(g)\notin G(F)$. Since $G$ is closed in the valuation topology, there is some $\gamma\in \Gamma(F)$ such that $F\models B_{>\gamma,\st(g)} \cap G=\emptyset$. However, this implies that $g \not\in G(K)$, a contradiction. The latter part just follows from the continuity of the group operations.
\end{proof}
Now let us come back to the $\mu$-types and ``dimensions", where by dimension we mean the topological dimension in the $\VF$-sorts in ACVF. For a type $p$ in the $\VF$-sort, $\dim(p)$ is defined to be the minimum of the dimensions of the definable sets that $p$ concentrates on. It agrees with the smallest dimension of varieties $V$ over $F$ such that $p\models x\in V$. We call the irreducible $F$-variety $V$ with the smallest dimension such that $p$ concentrates on (i.e. $p\models x\in V$) the \textbf{Zariski closure} of $p$. See~\cite[Section 4.1]{Yimu} for more details.
\begin{Def}
 A $G$-type $p\in S_G(F)$ is \textbf{$\mu$-reduced} if $p$ is of minimal dimension in $p_\mu$. We say $b\in G(\mathbb{U})$ is $\mu$-reduced if $tp(b/F)$ is $\mu$-reduced.
\end{Def}
The following is immediate from the definition.
\begin{lem}
If $p$ is $\mu$-reduced and $g\in G(F)$, then $g\cdot p$ is also $\mu$-reduced.
\end{lem}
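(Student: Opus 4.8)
The plan is to reduce the statement to two elementary invariance properties of the $G(F)$-action: that it carries $\mu$-classes bijectively onto $\mu$-classes, and that it preserves the $\VF$-dimension of types. Once both are in place, the lemma is immediate.

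First I would record that for $g \in G(F)$ the map $q \mapsto g\cdot q$ on $S_G(F)$ descends to the quotient $S^\mu_G(F)$; this is exactly the observation, already made above, that the $G(F)$-action respects $\sim_\mu$. In particular it restricts to a bijection from the $\mu$-class $p_\mu$ onto the $\mu$-class $(g\cdot p)_\mu$, with inverse given by left multiplication by $g^{-1}$. Thus every $q'$ in $(g\cdot p)_\mu$ is of the form $q' = g\cdot q$ with $q = g^{-1}\cdot q' \in p_\mu$.

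Next I would show that left translation preserves dimension: for any $q \in S_G(F)$ and $g \in G(F)$ we have $\dim(g\cdot q) = \dim(q)$. Since $g$ is $F$-rational, left multiplication $L_g \colon G \to G$ is an $F$-definable bijection with $F$-definable inverse $L_{g^{-1}}$; concretely, if $a \models q$ then $g\cdot a \models g\cdot q$ and $\mathrm{trdeg}(g\cdot a/F) = \mathrm{trdeg}(a/F)$ because $g \in F$. Hence the $\VF$-dimension, which is this transcendence degree, is unchanged.

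Finally I would combine the two. Assume $p$ is $\mu$-reduced, so $\dim(p) = \min\{\dim(q) : q \in p_\mu\}$. Given any $q' \in (g\cdot p)_\mu$, write $q' = g\cdot q$ with $q \in p_\mu$ as above; then $\dim(q') = \dim(q) \geq \dim(p) = \dim(g\cdot p)$, where the first and last equalities use translation-invariance of dimension. Hence $g\cdot p$ attains the minimal dimension in its own $\mu$-class, i.e.\ $g\cdot p$ is $\mu$-reduced. The only point requiring any care is the dimension-invariance step, and even there the content is simply that $g \in F$, so I do not anticipate a genuine obstacle.
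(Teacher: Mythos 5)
Your proof is correct and is essentially the paper's argument: both hinge on pulling back along left translation by $g^{-1}$, which respects the $\mu$-equivalence and preserves dimension since $g\in G(F)$. The only cosmetic difference is that the paper argues by contradiction, translating a lower-dimensional definable set $W$ to which $g\cdot p$ would be $\mu$-close, whereas you translate the types in the $\mu$-class directly; this is the same symmetry observation in two guises.
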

%\begin{proof}
%If $g\cdot p$ is $\mu$-close to $W$ where $W$ has smaller dimension then $\dim(p)$, we might assume that $W\subseteq G$. Then $g^{-1}\cdot W$ is $\mu$-close to $p$ and contradicts our assumption that $p$ is $\mu$-reduced.
%\end{proof}
The following shows that for standard $p$, there is a $\mu$-reduced standard $q \in p_\mu$, an analogue statement of~\cite[Corollary 3.5]{Ser} or~\cite[Lemma 4.3]{muACF}.
\begin{lem}
For $p$ a standard $G$-type over $F$, there is a standard type $q\in p_\mu$ such that $q$ is $\mu$-reduced.
\end{lem}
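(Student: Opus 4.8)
The plan is to obtain the reduction by induction on $\dim(p)$, producing at each stage a \emph{standard} type in $p_\mu$ of strictly smaller dimension until a $\mu$-reduced one is reached. (I read the conclusion as asserting a standard $q\in p_\mu$ that is $\mu$-reduced.) The starting point is the correspondence implicit in the definition of $\Stab^\mu$: since $q\sim_\mu p$ holds exactly when $q$ extends the partial type $\mu\cdot p$, the class $p_\mu$ is precisely the set of completions of $\mu\cdot p$, and $p$ is $\mu$-reduced exactly when $p$ is not $\mu$-close to any $F$-variety of dimension $<\dim(p)$. Hence it suffices to prove the following step: if $p$ is standard and not $\mu$-reduced, then there is a standard $q\in p_\mu$ with $\dim(q)<\dim(p)$. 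As the dimension is a nonnegative integer, the lemma follows by iterating this step.

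To carry out the step, fix a realization $a\models p$ in a standard extension $K$, and let $W$ be an irreducible $F$-variety with $\dim(W)<\dim(p)$ witnessing that $p$ is $\mu$-close to $W$, so $\mu\cdot p\cap W\neq\emptyset$ in $\mathbb{U}$. After composing with an $F$-automorphism of $\mathbb{U}$ carrying the relevant realization of $p$ to $a$ (exactly as in the proof of the previous Proposition), I may assume there is $\epsilon\models\mu$ with $\epsilon\cdot a\in W$; in particular $a$ is infinitely close to $W$. When $p$ is bounded this already finishes the step: by the argument of Lemma~\ref{lem:nbhd} applied to the closed set $W$ one gets $\st(a)=\st(\epsilon\cdot a)\in W(F)\subseteq G(F)$, and since $\st(a)$ is infinitely close to $a$ the type $q=tp(\st(a)/F)$ lies in $p_\mu$, is realized in $F$ and so is standard, and has dimension $0<\dim(p)$.

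For the general unbounded case the goal is to produce a point $c\in W$ that is infinitely close to $a$ and lies in some standard extension of $F$: then $c=\epsilon'\cdot a$ with $\epsilon'=c\cdot a^{-1}\models\mu$ (by $v$+$g$-continuity together with Lemma~\ref{lem:nbhd}), so $q=tp(c/F)\in p_\mu$ is standard with $\dim(q)\le\dim(W)<\dim(p)$, as desired. That such a $c$ exists in $\mathbb{U}$ is immediate; moreover, since $a\in K$ and $K\preceq\mathbb{U}$, elementarity yields points of $W(K)$ inside $B_{>\gamma,a}$ for every $\gamma\in\Gamma(F)$, so the partial type $\{x\in W\}\cup\{v(x_i-a_i)>\gamma:\gamma\in\Gamma(F)\}$ over $K$ is finitely satisfiable and hence consistent. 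What remains is to realize it inside a \emph{standard} extension.

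This last point is the heart of the matter and the main obstacle. A generic realization of the above type need not be standard: if two coordinates of $c-a$ acquire equal valuation with transcendental residue ratio, a new residue appears and $\res$ grows, so $tp(c/F)$ fails to be standard. The remedy is to select the infinitesimal displacement $c-a$ compatibly with the standard structure rather than generically, arranging the valuations $v(c_i-a_i)$ to be $\mathbb{Q}$-independent over $\Gamma(F)$ and strictly above it, with the residues of the relevant ratios pinned down in $\res(F)$, so that $F\langle a,c\rangle/F$ introduces no new residue elements and enlarges $\Gamma(F)$ only by elements lying above it, preserving convexity of $\Gamma(F)$ and the $\st$-property. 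The separatedness of $K/F$ (already invoked to deduce definability of standard types) and the slack coming from $\dim(W)<\dim(p)$ are what permit such a choice while keeping $c$ on $W$. Granting this, $q=tp(c/F)$ is a standard member of $p_\mu$ of dimension strictly below $\dim(p)$, and the induction described in the first paragraph completes the proof.
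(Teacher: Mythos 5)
Your reduction of the lemma to a one-step claim (a standard, non-$\mu$-reduced type is $\mu$-equivalent to a standard type of strictly smaller dimension) and the induction on dimension match the paper, but your execution of that step has two genuine gaps in the unbounded case. The first is a conflation of additive and multiplicative infinitesimal closeness at unbounded points: you realize a partial type asserting $c\in W$ and $v(c_i-a_i)>\Gamma(F)$, then claim $c\cdot a^{-1}\models\mu$ ``by $v$+$g$-continuity together with Lemma~\ref{lem:nbhd}''. But Lemma~\ref{lem:nbhd} identifies $(g+\nu^n(K))\cap G$ with $\mu(K)\cdot g$ only at points $g\in G(F)$ (or bounded points), and the identification genuinely fails at unbounded points. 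For example, in $SL_2\subseteq\VF^4$ take $t\in K$ with $v(t)>\Gamma(F)$ and $\delta$ with $v(\delta)=\tfrac12 v(t)>\Gamma(F)$, and set
\[
a=\begin{pmatrix}t^{-1}&0\\0&t\end{pmatrix},\qquad c=\begin{pmatrix}t^{-1}&\delta\\0&t\end{pmatrix};
\]
then $c\in (a+\nu^4(K))\cap SL_2$, yet $c\cdot a^{-1}$ is unipotent with upper-right entry $\delta t^{-1}$ of valuation $-\tfrac12 v(t)<\Gamma(F)$, so $c\cdot a^{-1}$ is not in $\mu$ (it is not even bounded). The same conflation undermines your finite-satisfiability argument: from $\epsilon\cdot a\in W$ with $\epsilon\models\mu$ one cannot conclude that $W$ meets $B_{>\gamma,a}$, since $\mu\cdot a\not\subseteq a+\nu^n$ for unbounded $a$. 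The condition you should work with throughout is the multiplicative one, $c\in G_{>\gamma,e}\cdot a\cap W$.

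The second and more serious gap is the step you introduce with ``Granting this'': producing the witness inside a standard extension is, on your approach, the entire content of the lemma, and your sketch (choosing the $v(c_i-a_i)$ to be $\mathbb{Q}$-independent over $\Gamma(F)$, pinning down residues, invoking separatedness and the dimension drop) is not an argument, nor is it clear it can be made into one. The paper avoids constructing any new extension by finding witnesses inside $K$ itself: since $p$ is $\mu$-close to a smaller-dimensional $F$-definable set $C$, for every $\gamma\in\Gamma(F)$ the formula $\exists y\exists z\,(x=y\cdot z\wedge y\in G_{>\gamma,e}\wedge z\in C)$ belongs to $p$; as $a\models p$ is realized in $K$, there are $g^*\in G_{>\gamma,e}(K)$ and $b\in C(K)$ with $a=g^*\cdot b$. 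The point $b$ is not required to be infinitesimally close to $a$: instead $g^*$ is bounded, so $g=\st(g^*)\in G(F)$ exists, and $g^*\in (g+\nu^n(K))\cap G=\mu(K)\cdot g$ by Lemma~\ref{lem:nbhd} applied legitimately at the $F$-point $g$. Hence $a=\epsilon\cdot(g\cdot b)$ with $\epsilon\models\mu$, so $q=tp(g\cdot b/F)$ is standard (realized in $K$), satisfies $\dim(q)\le\dim(C)<\dim(p)$, and $p\sim_\mu q$; iterate. (Alternatively, the overspill argument of Proposition~\ref{prop:contain} — the definable set of $\gamma$ with $G_{>\gamma,e}(K)\cdot a\cap C\neq\emptyset$ contains $\Gamma(F)$, hence some $\gamma>\Gamma(F)$ — yields a point of $C(K)$ that is $\mu$-close to $a$.) The moral is that the standard extension $K$ realizing $p$ already contains all the witnesses needed; the hard construction you postulate is unnecessary and is precisely what the paper's use of the standard part map circumvents.
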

\begin{proof}
For a given standard $p$, if $p$ is not $\mu$-reduced, there is a $F$-definable set $C$ of smaller dimension, such that $p$ is $\mu$-close to $C$. Particularly, for each $\gamma \in \Gamma(F)$, recall that $G_{>\gamma,e}=B_{>\gamma,e}\cap G$ and we have
\begin{align*}
p \models \exists y\exists z \,\,x=y\cdot z \wedge y \in G_{>\gamma,e} \wedge z \in C.
\end{align*}
Take $a\models p$ a realization in $K$, a standard extension of $F$. By the above, we have some $g^*\in G_{>\gamma,e}(K)$ and $b \in C(K)$ with $a=g^*\cdot b$. The condition $g\in G_{>\gamma,e}$ implies that $g^*$ has coordinates in $\conv_F(K)$. Let $g=\st(g^*)$. By Lemma \ref{lem:nbhd}, there is some $\epsilon \in \mu(K)$ such that $a=g\cdot \epsilon\cdot b$. Let $q'(x)=tp(b/F)$, it is standard and has smaller dimension. Moreover, $q(x)=g\cdot q'(x)$, and $q$ has the same dimension as $q'$ and $p\sim_\mu q$. Proceed until $q$ is of minimal dimension. 
\end{proof}

\subsection{Tame topology on definable sets}\label{sec:tame-top}
In this section, we recall some results from~\cite{HL}. They are presented slightly differently from the original statements to fit our usage. More specifically, we restrict our attention to definable sets and their $v$+$g$-topologies.
 \begin{Def}\label{def:v+g}
 Let $V$ be an algebraic variety over a valued field $F$,  $X\subseteq V$ is \textbf{$v$-open} if it is open for the valuation topology. $X \subseteq V$ is \textbf{$g$-open} if it is a positive Boolean combination of Zariski open/closed sets and sets of the form 
 \begin{align*}
     \{x \in U:v\circ f(x)>v\circ g(x)\},
 \end{align*}
 where $f$ and $g$ are regular functions defined on $U$, a Zariski open subset of $V$. If $Z\subseteq V$ is a definable subset of $V$, a subset $X$ of $Z$ is said to be $v$-open (respectively $g$-open) in $Z$ if $X= Z\cap Y$, where $Y$ is $v$-open (respectively $g$-open) in $V$. The complement of a $v$-open (respectively $g$-open) is called $v$-closed (respectively $g$-closed). We say $X$ is \textbf{$v$+$g$-open} (respectively \textbf{$v$+$g$-closed}) if it is both $v$-open and $g$-open (respectively both $v$-closed and $g$-closed).
 \end{Def}
The topology generated by $v$+$g$-opens will be exactly the valuation topology since the open balls are $v$+$g$-open. Note that the $v$+$g$-opens do not form a definable topology in the usual sense, as the $g$-opens are not closed under definable unions. However, the $v$+$g$-opens are closed under finite unions.

 \begin{eg}
Any ball (closed or open) is  both $v$-open and $v$-closed. However, for $\mathcal{O}=\{x:v(x)\geq 0\}$ the valuation ring, it is only $g$-closed but not $g$-open. Similarly, the maximal ideal $\mathfrak{m}=\{x: v(x)>0\}$ is $g$-open but not $g$-closed. 
 \end{eg}
 \begin{rmk}\label{rmk:vg}
 The $v$-topology is too poor to support any meaningful topological invariants. Historically, Berkovich approached non-Archimedean analytic geometry via Berkovich analytic spaces and analytifications~\cite{Ber}. Such analytic spaces carry nice topologies (locally compact, locally connected,  etc.). In~\cite{HL}, a model-theoretic Berkovich analytification $\widehat{V}$ was introduced and the topology of $\widehat{V}$ exhibits topological tameness as o-minimal topologies. Set-theoretically, for a definable set $X$, $\widehat{X}$ consists of the stably dominated/generically stable types concentrating on $X$. 
The $v$+$g$-opens are the ``traces" of the opens on $\widehat{V}$ restricted to definable sets in the following sense: For any definable set $X\subseteq V$, $\widehat{X}\subseteq \widehat{V}$ is open/closed if and only if $X$ is $v$+$g$-open/closed in $V$ respectively~\cite[Proposition 4.2.21]{HL}.
 \end{rmk}
  When restricted to definable versions of topological notions such as definably connected components, the behavior of definable sets remains relatively tame under the $v$+$g$-topology as indicated by the previous remark.
 \begin{Def}
 Let $X$ be a definable subset of $V$, an algebraic variety. We say that $X$ is \textbf{definably connected} if $X$ cannot be written as a disjoint union of two proper $v$+$g$-open subsets of $X$. We say that $X$ has \textbf{finitely many definably connected components} if $X$ can be written as a finite disjoint union of $v$+$g$-clopen in $X$, definably connected subsets. And each $v$+$g$-clopen subset above is called a \textbf{definably connected component} of $X$. 
 \end{Def}
  In~\cite{HL}, definably connectedness was defined via the topology on $\widehat{X}$, the stable completion of $X$. The discussion immediately precedes  ~\cite[Lemma 10.4.1]{HL} explained the equivalence of their definition and the one above, together with the fact that the definably connected component of $\widehat{X}$ is of the form $\widehat{U}$ for some $U$ $v$+$g$-clopen in $X$.
 \begin{eg}
      Let $B_1$, $B_2$ be two disjoint balls (open or closed), it is easy to see that $B_1\cup B_2$ is not definably connected. On the other hand, any $B\subseteq \VF$ ball (open or closed) is definably connected. This follows from the description of the standard homotopy of $\mathbb{P}^1$ in~\cite[Chapter 7.5]{HL}. 
 \end{eg}
 We need the following fact before describing the definably connected sets of $F$. We say $X\subseteq \VF$ is a \textbf{Swiss cheese} if $X=B\backslash (C_1\cup \ldots \cup C_n)$ where $B=\VF$ or a ball and all the $C_i$'s are balls.
 \begin{fact}[{\cite[Proposition 3.33]{Lou}}]\label{fact:C-min}
 Any definable subset $X$ of $F$ is a finite union of disjoint Swiss cheeses.
 \end{fact}
 Using the standard homotopy of $\mathbb{P}^1$, the following is true. It is implicit in~\cite[Chapter 7.5]{HL}.
 \begin{fact}\label{fact:connected}
Let $X\subseteq \VF$ be definable, $X$ is definably connected iff $X$ is a single Swiss cheese.
 \end{fact}
 \begin{Def}\label{def:v+g-cont}
 Let $f:V\rightarrow W$ be a definable function from $V$ to $W$, we say $f$ is \textbf{$v$(respectively $g$)-continuous} if it $f^{-1}(X)$ is $v$(respectively $g$)-open for $X$ a $v$(respectively $g$)-open subset of $W$. We say $f$ is \textbf{$v$+$g$-continuous} if $f$ is both $v$-continuous and $g$-continuous.
 \end{Def}
 \begin{eg}
All polynomial functions $\mathbb{A}^n\to \mathbb{A}^1$ are $v$-continuous. They are also $g$-continuous, which is easily checked from the definition. 
 \end{eg}
 \begin{rmk}
As suggested by the fact that the $v$+$g$-topology is the trace of the topology on $\widehat{V}$ to $V$ in Remark~\ref{rmk:vg}. $v$+$g$-continuous functions are the traces of continuous functions on $\widehat{V}$ in the following sense: Let $f:X\to Y$ be a definable function, the canonical extension $\hat{f}:\widehat{X}\to \widehat{Y}$ is continuous iff $f$ is $v$+$g$-continuous~\cite[Lemma 3.8.2 and 3.8.4]{HL}.
 \end{rmk}

\begin{fact}[{\cite[Theorem 10.4.2]{HL}}]\label{prop:connected}
We have the following:
 \begin{itemize}
     \item If $f$ is $v$+$g$-continuous and $X$ is definably connected, then $f(X)$ is definably connected. 
     \item If $V$ is a geometrically irreducible variety, then $V$ is definably connected.
 \end{itemize}

 \end{fact}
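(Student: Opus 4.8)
The two assertions I would handle separately. For the first, the plan is the familiar ``continuous image of a connected set is connected,'' with the only extra care being that preimages must be controlled in the $v$- and the $g$-topology simultaneously. Suppose toward a contradiction that $f(X)$ is not definably connected, say $f(X)=A_1\sqcup A_2$ with $A_1,A_2$ nonempty proper $v+g$-open subsets of $f(X)$. Since $f$ is both $v$-continuous and $g$-continuous, each $f^{-1}(A_i)$ is both $v$-open and $g$-open in $X$, hence $v+g$-open. As $A_1,A_2$ partition $f(X)$, the sets $f^{-1}(A_1),f^{-1}(A_2)$ partition $X$, and each is nonempty because $A_i\subseteq f(X)$ is a nonempty subset of the image. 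This realizes $X$ as a disjoint union of two nonempty proper $v+g$-open sets, contradicting the definable connectedness of $X$. No further input is needed here.

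For the second assertion I would reduce to curves. It suffices to show that every nonempty $v+g$-clopen subset $A$ of an irreducible $V$ equals $V$. If not, choose $a\in A$ and $b\in V\setminus A$ in some model; since $V$ is irreducible over an algebraically closed field, a Bertini-type general-position argument yields an irreducible curve $C\subseteq V$ through both $a$ and $b$ (take a general linear section of a projective model of $V$ constrained to pass through the two points, iterating down to dimension one). By the definition of the induced topologies, $A\cap C$ and $(V\setminus A)\cap C$ are $v+g$-open in $C$; they partition $C$ and are nonempty, so $C$ is not definably connected. Thus it is enough to treat irreducible curves. Applying the first part to the normalization $\pi:\tilde C\to C$ --- a finite, hence $v+g$-continuous, surjection --- gives $C=\pi(\tilde C)$ definably connected as soon as $\tilde C$ is, so I may further assume $C$ is a smooth irreducible curve.

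The remaining smooth-curve case is where the genuine non-archimedean content lives, and I expect it to be the main obstacle. Here one passes to a model of $C$ over the valuation ring with connected special fiber (semistable reduction) and uses the reduction map: its fibers over smooth points are balls and over nodes are annuli, each of which is itself definably connected by the tree-of-balls computation for $\mathbb{A}^1$ and $\mathbb{P}^1$ (any two closed balls are joined through the balls containing them up to the generic point). A $v+g$-clopen subset of $C$ then descends to a clopen subset of the connected special fiber and so must be trivial. Making this precise --- the existence and tameness of the reduction/skeleton, the connectedness of each fiber, and the gluing over the base --- is exactly the Hrushovski--Loeser connectedness theorem, and it is the step where I would invoke \cite{HL} rather than reprove it from scratch.
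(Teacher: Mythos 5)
The paper offers no proof of this statement at all: it is quoted as a black-box Fact from \cite{HL} (the attribution ``Hrushovski, Loeser'' is the proof, so to speak), consistent with the paper's stated policy of recalling results from \cite{HL} without re-deriving them. Your proposal therefore does strictly more than the paper. Your first paragraph is a correct, self-contained argument: preimages of the two $v$+$g$-open pieces of $f(X)$ are $v$+$g$-open in $X$ by the definition of $v$- and $g$-continuity, they are nonempty because the pieces lie in the image, and they partition $X$; this is exactly the classical topological argument and it goes through verbatim in this setting. Your reduction of the second assertion to smooth irreducible curves is also sound: the curve-joining step is a genuine classical theorem (any two closed points of an irreducible variety over an algebraically closed field lie on an irreducible curve --- it deserves a citation, e.g.\ to Mumford or the Stacks Project, rather than just the phrase ``Bertini-type''), and the normalization step correctly feeds back into your first assertion, since morphisms of varieties are $v$+$g$-continuous. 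The one place where the real non-archimedean content sits --- definable connectedness of a smooth irreducible curve --- you explicitly hand back to \cite{HL}, with a heuristic sketch (semistable reduction, fibers of the reduction map) that is plausible but not a proof; note that \cite{HL} actually prove this via the stable completion $\widehat{V}$ and its deformation retraction onto a skeleton, machinery the present paper deliberately avoids introducing. So both you and the paper ultimately rest on the same external theorem; the difference is that you isolate the minimal input needed (the curve case) and derive the rest by elementary arguments, which is a sharper and perfectly legitimate structuring, at the cost of invoking an extra algebro-geometric lemma for the reduction.
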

 
 The following is an easy corollary of \cite[Theorem 11.1.1, Proposition 11.7.1]{HL} that suits our purposes.
\begin{fact}\label{comp}
For a definable subset $X$ of some quasi-projective variety in the valued field sort, $X$ has finitely many definably connected components. Furthermore, if $X_t$ is a definable family of subsets, there is $n\in \mathbb{N}$ such that each $X_t$ has at most $n$ definably connected components.
 \end{fact}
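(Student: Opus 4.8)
The plan is to transfer the statement to the Hrushovski--Loeser stable completion and invoke their main structural theorem. Recall from \cite{HL} that to a quasi-projective variety $V$, and to a definable subset $X\subseteq V$, one associates a (pro-)definable space $\widehat{X}$ into which $X$ embeds as a dense subset of ``simple points'', and that the topology carried by $\widehat{X}$ restricts on $X$ to the $v$+$g$-topology described above. The essential input is the Hrushovski--Loeser deformation retraction theorem: $\widehat{X}$ admits a definable deformation retraction onto a finite simplicial complex $\Upsilon$, and in particular $\widehat{X}$ has only finitely many topological connected components (each of which, being one of finitely many closed sets covering $\widehat{X}$, is clopen).

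First I would make precise the dictionary between definable connectedness of $X$ and topological connectedness of $\widehat{X}$. In one direction, if $\widehat{X}=U_1\sqcup U_2$ is a partition into nonempty clopen sets, then by density each $U_i\cap X$ is a nonempty $v$+$g$-open subset of $X$, and together they witness that $X$ is not definably connected. Conversely, a $v$+$g$-clopen subset $Y\subseteq X$ is cut out by a positive Boolean combination of $v$- and $g$-conditions, and the very same formulas define a clopen subset of $\widehat{X}$ whose simple points are exactly $Y$; since $X$ is dense, this gives a bijection between $v$+$g$-clopen subsets of $X$ and clopen subsets of $\widehat{X}$. Consequently the definably connected components of $X$ are precisely the traces on $X$ of the connected components of $\widehat{X}$, and their number is finite by the previous paragraph.

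For the uniform bound over a definable family $X_t$, I would appeal to the fact that in \cite{HL} the retraction and the target complex are produced uniformly in the parameter $t$: the complexes $\Upsilon_t$ form a single definable family, so the number of connected components of $\widehat{X_t}$, and hence of $X_t$, is bounded independently of $t$. (Heuristically one can also see unboundedness is impossible by a compactness argument, but the definable family of retractions is the clean source of the bound.)

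The main obstacle is the correspondence of the second paragraph: one must check that the $v$+$g$-topology on $X$ is genuinely the subspace topology induced from $\widehat{X}$, and that $v$+$g$-clopen subsets of $X$ extend \emph{uniquely} to clopen subsets of $\widehat{X}$. This is exactly where the real content of \cite{HL}---iso-definability of $\widehat{X}$ and its topological tameness---is used; granting it, the deduction of both the finiteness and the uniformity is purely formal, which is why the statement is only a corollary of the main results there.
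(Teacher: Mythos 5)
The paper gives no proof of this Fact at all: it is quoted from Hrushovski--Loeser as ``an easy corollary of the main results'' of \cite{HL}, and your sketch is precisely that standard derivation (definable deformation retraction of $\widehat{X}$ onto a finite simplicial complex, the correspondence between definable $v$+$g$-clopen subsets of $X$ and clopen subsets of $\widehat{X}$, and uniformity in families from uniformity of the retraction), so you are taking essentially the same route the paper implicitly relies on. One small caution: your claim that the topology on $\widehat{X}$ ``restricts on $X$ to the $v$+$g$-topology'' should not be read literally---the $v$+$g$-opens do not form a topology (the paper notes this), and the correct statement, that a definable subset is $v$+$g$-clopen iff its stable completion is clopen in $\widehat{X}$, is itself one of the Hrushovski--Loeser results you rightly identify as the crux.
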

 For readers unfamiliar with~\cite{HL}, we briefly explain how this follows from their theorems. Their main theorem states that the stable completion $\widehat{X}$ of $X$ admits a ``deformation retraction" to a definable set in the value group, which has finitely many definably connected components by the o-minimality of $\Gamma$. Thus our statement follows immediately since such ``deformation retraction" preserves definably connected components. The ``furthermore" part follows the same way from the uniform version of their main theorem.

 We need one last definition and theorem for this section. This definition appeared in~\cite[Chapter 4.2]{HL}
 \begin{Def}\label{def:unbounded}
  Let $X\subseteq \VF^n$ be definable, we say that $X$ is \textbf{bounded} if there is $\gamma\in\VG$ such that $v(x_i)\geq \gamma$ for any $x\in X$ and $i=1,\ldots,n$. A type $p\in S_G(F)$ is \textbf{unbounded} if no $X\in p$ is bounded.
 \end{Def}
 The following follows from \cite[Theorem 4.2.20, Proposition 4.2.21]{HL}.
 \begin{fact}\label{fact:bdd}
 Let $X,Y$ be definable sets in the $\VF$-sorts and $f:X\to Y$ be a $v$+$g$-continuous function. If $X\subseteq \VF^n$ is $v$+$g$-closed and bounded, so is $f(X)$.
 \end{fact}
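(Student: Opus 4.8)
The plan is to read this as a definable-compactness statement: in the sense of \cite{HL}, a $v$+$g$-closed and bounded definable set behaves exactly like a compact set, and the assertion is precisely that $v$+$g$-continuous maps send such sets to such sets. I would prove it by passing to the stable completions $\widehat X$ and $\widehat Y$ of \cite{HL}, where the relevant finiteness becomes honest topological compactness, running the standard ``continuous image of a compact set is compact'' argument there, and then descending.

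First I would set up the dictionary provided by \cite{HL}: a definable $X\subseteq\VF^n$ is $v$+$g$-closed and bounded precisely when its stable completion $\widehat X$ is definably compact, and a $v$+$g$-continuous map $f\colon X\to Y$ extends canonically to a continuous map $\widehat f\colon \widehat X\to\widehat Y$. Granting this, the argument is short. Since $\widehat X$ is definably compact and $\widehat f$ is continuous, $\widehat f(\widehat X)$ is definably compact; and because $\widehat{f(X)}=\widehat f(\widehat X)$ --- both being the space of limits of definable types concentrated on $f(X)$ --- it follows that $\widehat{f(X)}$ is definably compact. Reading the dictionary backwards yields that $f(X)$ is $v$+$g$-closed and bounded.

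If one prefers to avoid $\widehat V$ (as elsewhere in this note), the same proof can be phrased directly through definable types and their limits, separating the two conclusions. For boundedness, write $f=(f_1,\dots,f_m)$; each $v\circ f_j\colon X\to\Gamma\cup\{\infty\}$ is definable, and if its image were unbounded below there would be a definable type on $X$ along which $v(f_j)\to-\infty$. Definable compactness of $X$ forces this type to converge to some $x^\ast\in X$, and $v$+$g$-continuity of $f_j$ at $x^\ast$ pins the value of $v\circ f_j$ to $v(f_j(x^\ast))$, a contradiction. For closedness, given $y_0$ in the $v$+$g$-closure of $f(X)$, I would take a definable type $q$ on $f(X)$ with limit $y_0$, lift it along the surjection $f\colon X\to f(X)$ to a definable type $p$ on $X$, use definable compactness of $X$ to obtain a limit $x_0\in X$ of $p$, and conclude $f(x_0)=y_0\in f(X)$ by continuity.

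The hard part will be the closedness half, and in particular the two inputs it rests on: that a $v$+$g$-closed bounded set really is compact after completion, so that limits of definable types exist and remain inside the set, and that a definable type on $f(X)$ lifts to one on $X$ whose limit maps onto $y_0$. Both are the genuine content of the compactification theory of \cite{HL}; the identification $\widehat{f(X)}=\widehat f(\widehat X)$, where the properness coming from compactness of the source is used, is the step I expect to need the most care.
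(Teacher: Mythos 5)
There is nothing in the paper to compare your argument against: the statement is labelled a Fact and imported from \cite{HL} without proof (``The following is from \cite{HL}.''), in keeping with the paper's declared policy of avoiding the space $\widehat{V}$. So the right benchmark is how the statement is obtained in \cite{HL} itself, and your proposal is, in substance, exactly that derivation; it is correct. The dictionary you invoke is theirs: for a definable subset $X$ of a quasi-projective variety, $\widehat{X}$ is definably compact if and only if $X$ is $v$+$g$-closed and bounded; a definable map is $v$+$g$-continuous precisely when the induced map $\widehat{f}$ on stable completions is continuous; and continuous definable images of definably compact sets are definably compact. The one step you rightly flag, $\widehat{f}(\widehat{X})=\widehat{f(X)}$, closes exactly as you suspect: $\widehat{f}(\widehat{X})$ is definably compact, hence closed in the Hausdorff space $\widehat{f(X)}$, and it contains the simple points of $f(X)$, which are dense in $\widehat{f(X)}$, so the obvious inclusion $\widehat{f}(\widehat{X})\subseteq\widehat{f(X)}$ is an equality; alternatively one can quote the lifting of definable types along definable surjections, which is also in \cite{HL}. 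One caution about your ``hat-free'' fallback: both of its inputs --- that any failure of boundedness or closedness of $f(X)$ is witnessed by a definable type on $X$, and that definable types lift along definable surjections --- are themselves nontrivial results of \cite{HL} (the latter is essentially how they prove the image-of-compact statement), so that phrasing does not avoid the machinery; it only hides the hats.
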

 \begin{prop}
 If $X$ is unbounded and defined over $F$, then there is an unbounded standard type $p\in S_X(F)$.
\end{prop}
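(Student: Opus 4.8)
The plan is to realize an unbounded point of $X$ inside a standard extension, splitting the work into a consistency step and a realization step, with the tension between ``unbounded'' and ``standard'' as the crux.

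First I would record what unboundedness buys us, using the o-minimality of the value group. Since $X$ is unbounded, some coordinate projection $\pi_i\colon X\to\VF$ has $v(\pi_i(X))$ unbounded below in $\Gamma$; as $\pi_i(X)$ is an $F$-definable subset of $\VF$, its valuation image is a finite union of points and intervals in the divisible ordered abelian group $\Gamma(F)$, so, being unbounded below, it contains a final segment $\{\gamma:\gamma<\gamma_0\}$ for some $\gamma_0\in\Gamma(F)$. Equivalently, the partial type $\Sigma(x):=\{x\in X\}\cup\{\,\neg(v(x)\geq\overline{\gamma}):\gamma\in\Gamma(F)\,\}$ is finitely satisfiable (a single instance is witnessed in $X$ precisely by unboundedness, as in the discussion before Fact~\ref{fact:bdd}), hence consistent; any realization is an unbounded point of $X$.

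Next I would produce a standard extension carrying such a point. Fix a proper standard extension $F\preceq K$, whose existence is noted above; since $\Gamma(F)$ is a convex subgroup of $\Gamma(K)$ and the extension is proper, there is $\delta\in\Gamma(K)\setminus\Gamma(F)$ lying below all of $\Gamma(F)$, i.e.\ an infinitely large element $t\in K$ with $v(t)=\delta<\gamma_0$. Adjoining such a $\delta$ keeps $\Gamma(F)$ convex, so this is exactly the value-group behaviour a standard extension permits. Using that every valuation $<\gamma_0$ is attained in $\pi_i(X)$, the $F(\delta)$-definable slice $\pi_i(X)\cap\{v(x)=\delta\}$ is nonempty; since its shape on the sphere $v=\delta$ is governed by the residue data of the $F$-definable set $X$, I can choose $t$ in this slice generically so that $\res(F(t))=\res(F)$ and $\Gamma(F(t))=\Gamma(F)\oplus\mathbb{Q}\delta$, keeping the extension standard. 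Finally I lift $t$ to a point $a\in X$ with $a_i=t$ by realizing the nonempty fibre over $t$ inside a standard extension. Then $a$ has an infinitely large coordinate, so $tp(a/F)$ is unbounded, and it is realized in a standard extension, so it is standard; taking $p=tp(a/F)$ finishes the proof.

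The main obstacle is the tension between the two requirements: ``unbounded'' forces a new value-group element strictly below $\Gamma(F)$, while ``standard'' demands that $\Gamma(F)$ stay convex and that the residue field not grow. The first is resolved by placing $\delta$ outside $\Gamma(F)$ on the negative side, where convexity of $\Gamma(F)$ is preserved precisely because $\delta$ is not squeezed between two $F$-values; the second by choosing the lift $a$ so that its residues already lie in $\res(F)$. Keeping both under control while solving the membership $a\in X$ is the delicate point, and I expect it to rest on the o-minimality of $\Gamma$ together with the fact that the residue data of the $F$-definable slice of $X$ at the sphere $v=\delta$ is exhausted by $\res(F)$.
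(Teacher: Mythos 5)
Your overall strategy --- realize an unbounded point of $X$ inside a standard extension --- is the same as the paper's, but your execution has a genuine gap at exactly the step you yourself flag as delicate. Having produced $\delta\in\Gamma(K)$ below $\Gamma(F)$ and argued the slice $\pi_i(X)\cap\{v(x)=\delta\}$ is nonempty, you propose to choose $t$ in it ``generically'' so that $F(t)$ stays standard over $F$, and then to ``lift $t$ to a point $a\in X$ \ldots by realizing the nonempty fibre over $t$ inside a standard extension.'' Neither claim is proved. Realizing a point of an $F(t)$-definable set inside an extension that remains standard over $F$ is a statement of the very same kind as the proposition being proved, so as written the argument is circular, and your closing sentence (``I expect it to rest on\ldots'') concedes that this step is open. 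The genericity claim about $\res(F(t))$ and $\Gamma(F(t))$ is likewise asserted, not established.

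The gap closes --- and the whole intermediate apparatus (o-minimal cell decomposition of $v(\pi_i(X))$, the generic choice of $t$, the control of $\res(F(t))$) becomes unnecessary --- once you notice that every witness you need already lies in $K$ itself. A standard extension is an extension of models of ACVF, hence elementary (the paper writes $F\preceq K$), so unboundedness of $X$, which is the $F$-statement ``for every $\gamma$ there is $x\in X$ with some coordinate of valuation below $\gamma$,'' holds in $K$; applying it to your $\delta<\Gamma(F)$ yields $a\in X(K)$ with a coordinate of valuation below $\Gamma(F)$, i.e.\ a coordinate outside $\conv_F(K)$. (Equivalently: since $K$ is a model, $\pi_i(X)(K)=\pi_i(X(K))$, so your $t$ lifts to some $a\in X(K)$ immediately.) Since $a$ is a tuple from $K$ and $K$ is standard over $F$, the type $tp(a/F)$ is standard by definition --- standardness of a type asks only for a realization in \emph{some} standard extension, so no control of $\res(F(t))$ or $\Gamma(F(t))$ is needed. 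Finally $tp(a/F)$ is unbounded: any bounded $F$-definable $Y$ is bounded by some $\gamma\in\Gamma(F)$ (the witness to boundedness can be taken in $F$ by elementarity), which the coordinate of $a$ of valuation below $\Gamma(F)$ violates. This observation is precisely the paper's two-line proof.
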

\begin{proof}
Take a proper standard extension $F\preceq K$. Since $X$ is unbounded, there is $a \in X(K)$ such that $a\notin X(\conv_F(K))$. Then $tp(a/F) \in S_X(F)$ is standard and unbounded.
\end{proof}
 \section{Proof of the main theorem}\label{sec:proof}
 To compute the $\mu$-stabilizer of a standard type via the standard part map, it is not necessary to work in the monster model. More explicitly, for standard extensions $F\preceq K$, one computes the $\mu$-stabilizers of standard types $p$ realized in $K$ correctly using the standard part, even though $K$ is not saturated. (See Corollary~\ref{cor:def} and Proposition~\ref{prop:st}.)
 \begin{prop}\label{prop:contain}
 Let $p$ be a standard type $p\in S_G(F)$ with a realization $a$ in a standard extension $F\preceq K$. For any $F$-definable set $X$ such that $a\in X$, $\Stab^\mu(p)\subseteq \st(X\cdot a^{-1}\cap G(\conv_F(K)))$.
 \end{prop}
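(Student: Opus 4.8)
The plan is, given $g\in\Stab^\mu(p)$, to manufacture an explicit witness $c\in S\cdot a^{-1}\cap G(\conv_F(K))$ with $\st(c)=g$. First I would record the basic infinitesimal relation: since $a\models p$ and $g$ stabilizes the $\mu$-class $p_\mu$, exactly as in the proof of the preceding proposition (using $g\mu g^{-1}=\mu$) one gets $g\cdot a\models\mu\cdot p$. Because $a\in S$ and $S$ is $F$-definable, $S\in p$, so for every $\gamma\in\Gamma(F)$ the $G$-formula $G_{>\gamma,e}\cdot S$ lies in $\mu\cdot p$; hence $g\cdot a\in(G_{>\gamma,e}\cdot S)(K)$ for all $\gamma\in\Gamma(F)$. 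If $K$ were saturated I could now produce a single $s\in S$ with $g\cdot a\cdot s^{-1}\in\mu$ by compactness, but $K$ is not saturated, and circumventing this is the crux of the argument.

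To replace compactness I would exploit that $p$, being standard, is definable over $F$, so that $q=tp(g\cdot a/F)$ (itself standard, as $g\cdot a\in K$) is $F$-definable. Consider the $F$-definable set $\Theta=\{\gamma\in\Gamma:(x\in G_{>\gamma,e}\cdot S)\in q\}$. Since $g\cdot a\models q$, the previous paragraph gives $\Gamma(F)\subseteq\Theta$, and $\Theta$ is downward closed because $G_{>\gamma,e}\subseteq G_{>\gamma',e}$ whenever $\gamma'\le\gamma$. By the o-minimality of the value group in ACVF, a downward-closed $F$-definable subset of $\Gamma$ containing all of $\Gamma(F)$ must be all of $\Gamma$ (a proper one would be bounded above by a point of $\Gamma(F)\cup\{\pm\infty\}$, contradicting $\Gamma(F)\subseteq\Theta$). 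Thus $\Theta=\Gamma$, and so $g\cdot a\in(G_{>\gamma,e}\cdot S)(K)$ for every $\gamma\in\Gamma(K)$, not merely for $\gamma\in\Gamma(F)$.

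To conclude, I would use that $F\preceq K$ is a proper standard extension, so $\nu(K)$ contains a nonzero element and there is $\gamma_0\in\Gamma(K)$ with $\gamma_0>\Gamma(F)$. Instantiating the last statement at $\gamma_0$ produces $y\in G_{>\gamma_0,e}(K)$ and $s\in S(K)$ with $g\cdot a=y\cdot s$. Since $\gamma_0>\Gamma(F)$ we have $y-e\in\nu^n(K)$, whence $y\in\mu(K)$ by Lemma \ref{lem:nbhd}. Setting $c=s\cdot a^{-1}=y^{-1}\cdot g$, the $v$-continuity of the group operations together with $y^{-1}\in\mu(K)$ and $g\in G(F)$ forces $c\in G(\conv_F(K))$ and $\st(c)=g$; by construction $c\in S\cdot a^{-1}$. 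Therefore $g=\st(c)\in\st(S\cdot a^{-1}\cap G(\conv_F(K)))$, which is what we wanted.

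The step I expect to be the real obstacle is the middle one: turning the family of approximations indexed by $\Gamma(F)$ into a genuine $\mu$-infinitesimal factorization inside the non-saturated model $K$. The idea is that definability of the standard type $p$ converts the approximation into an $F$-definable cut $\Theta$ in the value group, and o-minimality of $\Gamma$ forces that cut to be improper, so the approximation automatically persists at some $\gamma_0>\Gamma(F)$, which a proper standard extension is guaranteed to contain.
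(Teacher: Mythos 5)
Your overall strategy matches the paper's: reduce to producing a factorization $g\cdot a=y\cdot s$ with $y\in G_{>\gamma,e}(K)$ for some $\gamma>\Gamma(F)$ and $s\in S(K)$, and then recover $g=\st(s\cdot a^{-1})$ via Lemma~\ref{lem:nbhd}. Your first and last paragraphs are sound. But the middle step --- the one you yourself flag as the crux --- contains a genuine gap. You define $\Theta$ via the $\psi$-definition $d_\psi$ of the type $q=tp(g\cdot a/F)$, correctly prove $\Theta=\Gamma$, and then infer that $g\cdot a\in (G_{>\gamma,e}\cdot S)(K)$ for \emph{every} $\gamma\in\Gamma(K)$. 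That inference is invalid: the definition scheme of a type over $F$ agrees with actual satisfaction by the realization $g\cdot a$ only when the parameter lies in $\Gamma(F)$. For $\gamma\in\Gamma(K)\setminus\Gamma(F)$, the statement $K\models d_\psi(\gamma)$ says that $\psi(x,\gamma)$ belongs to the canonical extension $q|K$, a type over $K$ realized only in a proper elementary extension of $K$ --- not by the element $g\cdot a\in K$ itself. A concrete counterexample: let $q$ be the type of a nonzero element of $\nu$ (so $q\models v(x)>\gamma$ for all $\gamma\in\Gamma(F)$, $x\neq 0$) and let $\psi(x,\gamma)$ be $v(x)>\gamma$. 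Then $d_\psi$ is identically true, so the corresponding $\Theta$ is all of $\Gamma$; yet for a realization $b\in K$ and $\gamma=v(b)\in\Gamma(K)$, the formula $\psi(b,\gamma)$ fails. So establishing $\Theta=\Gamma$ does not hand you the factorization at any $\gamma>\Gamma(F)$, and the definability of standard types, which you invoke precisely to get around non-saturation, cannot play that role.

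The repair is what the paper does, and it is simpler: drop type-definability entirely and work with the set $I=\{\gamma\in\Gamma(K): K\models \exists y\exists z\,(y\in G_{>\gamma,e}\wedge z\in S\wedge g\cdot a=y\cdot z)\}$, which is definable in $K$ \emph{with parameters} $g\cdot a$, so that membership of any $\gamma$ --- including $\gamma>\Gamma(F)$ --- directly yields the desired factorization. Your first paragraph shows $\Gamma(F)\subseteq I$, and $I$ is downward closed. By o-minimality of $\Gamma(K)$, $I$ is an initial segment with supremum $\delta\in\Gamma(K)\cup\{+\infty\}$. Since $I\supseteq\Gamma(F)$ and $\Gamma(F)$ is a nontrivial divisible ordered group with no maximum, either $\delta=+\infty$ or $\delta>\Gamma(F)$; in either case $I$ contains some $\gamma_0>\Gamma(F)$ (in the case $I=(-\infty,\delta)$ take $\gamma_0=\delta/2$, which is above $\Gamma(F)$ by divisibility, and when $I=\Gamma(K)$ use that the extension is proper so such $\gamma_0$ exists at all). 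The point you missed is that one does not need $I$ to be $F$-definable, nor to be all of $\Gamma(K)$: a single point of $I$ above $\Gamma(F)$ suffices, and o-minimality applied to the parameter-definable set already provides it. With this $\gamma_0$ in hand, your final paragraph goes through verbatim.
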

\begin{proof}
We may assume that $X\subseteq G$. Let $g\in \Stab^\mu(p)$, so $g\cdot p\in p_\mu$. Thus $g\cdot p$ is $\mu$-close to $X$. For any $\gamma\in \Gamma(F)$, we have
\[
 p(x)\models \exists y\exists z\,\, y \in G_{>\gamma,e}\wedge z\in X\wedge g\cdot x=y\cdot z.
\]
Working in $K$, we define
\[
I=\{\gamma\in \Gamma:  \exists y\exists z\,\, y \in G_{>\gamma,e}\wedge z\in X\wedge g\cdot a=y\cdot z\}.
\]
Thus $\Gamma(F)\subseteq I$. By o-minimality of $\Gamma(K)$, and the fact that $\Gamma(F)\preceq \Gamma(K)$ as divisible ordered abelian groups, there is $\gamma \in \Gamma(K)$, $\gamma>\Gamma(F)$, $\epsilon\in G_{>\gamma,e}(K)$, and $b \in X(K)$ such that $\epsilon\cdot b=g\cdot a$. Then $g=\st(b\cdot a^{-1})$ by the choice of $\epsilon$.
\end{proof}

\begin{prop}\label{prop:type-iso}
Let $p\in S_G(F)$ be $\mu$-reduced and standard. Let $F\preceq K$ be a standard extension, with $a\in K$ a realization of $p$. Let $V$ be the Zariski closure of $p$. For any $\gamma \in \Gamma(F)$ and any definably connected component $B$ of $G_{>\gamma,e}(K)\cdot a \cap V$, if $b_1,b_2 \in B$, then $tp(b_1/F)=tp(b_2/F)$. 
\end{prop}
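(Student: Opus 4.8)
The plan is to reduce the statement to a separation property for a single $F$-definable set and then to control the $v$+$g$-frontier via $\mu$-reducedness. Since $tp(b_1/F)=tp(b_2/F)$ holds precisely when $b_1$ and $b_2$ lie in the same $F$-definable sets, it suffices to prove that for every $F$-definable $D\subseteq V$ and every definably connected component $B$ of $G_{>\gamma,e}(K)\cdot a\cap V$, one has either $B\subseteq D$ or $B\cap D=\emptyset$. First I would record that the set $N=G_{>\gamma,e}(K)\cdot a\cap V$ is the intersection of the geometrically irreducible $V$ with a $v$-open neighbourhood of $a$, so by Fact~\ref{prop:connected} and Fact~\ref{comp} it is a definable set with finitely many definably connected components, each a nonempty open piece of $V$.

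The core step is to show that the trace $D\cap B$ is $v$+$g$-clopen in $B$, so that definable connectedness of $B$ forces the dichotomy above. To this end I would pass to the $v$+$g$-interiors $U=\mathrm{int}_{vg}(D)\cap V$ and $U'=\mathrm{int}_{vg}(V\setminus D)$, which are $v$+$g$-open in $V$, and set $E=V\setminus(U\cup U')$, the $v$+$g$-frontier of $D$ in $V$. The key geometric input, coming from the dimension theory in \cite{HL}, is that $E$ is an $F$-definable set with $\dim E<\dim V=\dim p$. On $B\setminus E$ the set $D$ agrees with the clopen set $U$, so the only way $D$ can separate two points of $B$ is through a point of $B\cap E$.

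Here is where $\mu$-reducedness enters. Since $\dim E<\dim p$ and $p$ is $\mu$-reduced, $a$ is not $\mu$-close to $E$, so there is $\gamma_0\in\Gamma(F)$ with $G_{>\gamma_0,e}(K)\cdot a\cap E=\emptyset$; for the given $\gamma$ I would then analyse how $E$ meets the coarser neighbourhood $N$. The plan is to run an induction on $\dim V$: the intersection $E\cap N$ is itself a definable neighbourhood-trace of $a$ inside the lower-dimensional $E$, and the uniform bound on the number of definably connected components (Fact~\ref{comp}) lets me compare the components of $N$ with those of $N\setminus E$ and of $E\cap N$, concluding that $E$ cannot separate two points lying in one component $B$ without producing a lower-dimensional $\mu$-reduced configuration handled by the inductive hypothesis. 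Feeding this back, $D\cap B$ is clopen in $B$, giving $B\subseteq D$ or $B\cap D=\emptyset$, and ranging over all $F$-definable $D$ yields $tp(b_1/F)=tp(b_2/F)$.

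The main obstacle I anticipate is exactly the gap between being $\gamma$-close at the fixed finite scale $\gamma$ and being $\mu$-close: $\mu$-reducedness only furnishes some finer scale $\gamma_0$ at which the neighbourhood avoids the frontier $E$, whereas the statement is asserted for every $\gamma$. Making the induction on dimension interact correctly with this change of scale — so that two points kept in a single definably connected component of the coarse neighbourhood are never separated by the frontier — is the delicate point, and it is where the uniform finiteness of components and the definable connectedness of the irreducible $V$ do the real work.
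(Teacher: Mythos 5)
Your reduction of the statement to the dichotomy ``$B\subseteq D$ or $B\cap D=\emptyset$ for every $F$-definable $D$'' is just a restatement of the goal, so everything rests on the proposed mechanism, and that mechanism has a fatal gap: the claim that the $v$+$g$-frontier $E=V\setminus\bigl(\mathrm{int}_{vg}(D)\cup\mathrm{int}_{vg}(V\setminus D)\bigr)$ is an $F$-definable set with $\dim E<\dim V$ is not in \cite{HL} and is false in ACVF. First, the $v$+$g$-opens are not closed under infinite unions (they do not form a topology, as the paper notes), so ``$v$+$g$-interior'' is not a well-defined definable operation: the union of all $v$+$g$-open subsets of a definable set need not be $v$+$g$-open, nor even definable. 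Second, no repair of the definition saves the dimension bound. Take $V=\mathbb{A}^1$ and $D=\{x:v(x)\geq 0\}$. Its complement $\{v(x)<0\}$ is $v$+$g$-open, while $D$ is $v$+$g$-closed but \emph{not} $g$-open (a positive Boolean combination of Zariski opens and strict valuative inequalities traces an open subset of $\Gamma$ on generic points of spheres, and $\{t\geq 0\}$ is not open in $\Gamma$). Under the pointwise reading of interiors one gets $E=\emptyset$, since every point of $D$ lies in a small ball inside $D$; but then your partition of $B$ into $U\cap B$ and $U'\cap B$ is not a partition into relatively $v$+$g$-open pieces (pointwise-interior does not imply open here, precisely because unions of opens need not be open), so definable connectedness of $B$ yields nothing. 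Indeed $\mathbb{A}^1$ is definably connected by Fact~\ref{prop:connected}, yet $D$ cuts it into two nonempty definable pieces whose points have different types over $F$. Under any reading making both pieces genuinely $v$+$g$-open, $E$ must swallow a full-dimensional set such as the sphere $\{v(x)=0\}$. Note also that the frontier statement that \emph{is} true in ACVF — the $v$-topological frontier of a definable set has smaller dimension — is useless for your purpose: $D$ above is $v$-clopen, with empty $v$-frontier, and still separates types; this is exactly where the o-minimal intuition breaks down in a totally disconnected field. Consequently your induction on $\dim V$ has nothing to run on, and the statement's truth genuinely depends on $B$ being a component of $G_{>\gamma,e}(K)\cdot a\cap V$ with $a$ $\mu$-reduced and $K$ \emph{standard} over $F$ — your sketch never uses standardness at all, which is already a warning sign.

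For comparison, the paper's proof avoids any frontier consideration by working one function at a time. Your first use of $\mu$-reducedness does appear there, but to a different end: every $b\in B$ is shown to be $\mu$-reduced, hence generic in $V$, so all rational functions $f\in F(V)$ are defined at $b_1,b_2$. By quantifier elimination, if $tp(b_1/F)\neq tp(b_2/F)$ there is a single $f\in F(V)$ with $v(f(b_1))\leq 0$ and, using standardness to sharpen, $v(f(b_2))>\Gamma(F)$. Since the group operations are $v$+$g$-continuous, $f(B)$ is definably connected by Fact~\ref{prop:connected}, hence by $C$-minimality a Swiss cheese $C\setminus\bigcup_i C_i$; standardness of $F\preceq K$ (the $F$-points of the maximal ideal cannot be covered by finitely many proper subballs) then forces $f(B)$ to contain an $F$-point. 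An $F$-value of $f$ at a generic point of $V$ makes $f$ constant on $V$, contradicting the valuative separation. This is the role connectedness actually plays: not to make arbitrary definable sets trace clopen subsets of $B$ (false), but to constrain the one-dimensional image $f(B)$ enough for the standardness argument to apply.
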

\begin{proof}
We first show that any $b \in B$ is $\mu$-reduced. Assume to the contrary, there is a subset $W$ of $G$, with $\dim(W)<\dim(V)$, and $b$ is $\mu$-close to $W$. Since $b\in G_{>\gamma,e}(K)\cdot a\cap V$, $b=g\cdot a$ for some $g\in G_{>\gamma,e}(K)$. The above implies that $a$ is $\mu$-close to $\st(g)^{-1}\cdot W$, which has the same dimension as $W$, a contradiction. In particular, this implies that any $b\in B$ does not concentrate on any proper subvariety of $V$ defined over $F$. In other words, $b$ is a generic point of $V$.

By genericity, no regular function $f$ in $F[V]$ vanishes on $b_i$'s. So it is possible to evaluate all rational functions $f \in F(V)$ on $b_i$'s. Assume to the contrary that the $b_i$'s have different types over $F$.  By quantifier elimination in ACVF, without loss of generality, there must be a rational function $f$ on $V$ such that $v(f(b_1))\leq 0$ and $v(f(b_2))>0$. By  Fact~\ref{fact:C-min},~\ref{fact:connected}, and~\ref{prop:connected}, $f(B)$ is of the form $C\backslash (\cup_i C_i)$, where $C_i$'s are disjoint subballs of $C$.
\begin{claim*}
$f(B)$ contains a $F$-point.
\end{claim*}
\begin{proof}[Proof of claim]
First of all, $C$ is a ball with $a\in C$, $v(a)>0$ and $a'\in C$, $v(a')\leq 0$. Thus $C$ contains the ball $\mathfrak{m}=\{x: v(x)> 0\}$. Note that $a\in \mathfrak{m}$, so $\mathfrak{m}$ is not contained in any of the $C_i$'s. For $f(B)$ to have no $F$-point, the $F$-points of $\mathfrak{m}$ must be covered by the $C_i$'s. Since balls are either disjoint or nested, we may assume that $C_i$'s are all subballs of $\mathfrak{m}$. By the fact that $F\preceq K$ is standard, $\Gamma(F)$ is a convex subgroup of $\Gamma(K)$. If $C_i$'s all have radii greater than $0$,  one can easily produce $a\in \mathfrak{m}$ with $a\notin C_i$ for all $i$. Thus there is $C_i$ with radius $0$, but this is a contradiction to $\mathfrak{m}$ is not contained in any of the $C_i$'s.
\end{proof} 
 Now, we have a generic point $b\in B$ of $V$ over $F$ and $f(b)$ is a $F$-point. Hence $f$ is a constant function, which is a contradiction. Thus established the proposition.
\end{proof}
\begin{lem}\label{thm:bound}
Let $p\in S_G(F)$ be standard, $\mu$-reduced and $a\models p$ with $V$ denoting its Zariski closure. There is $m\in \mathbb{N}$ and $p_1,...,p_m\in S_G(F)$, such that for $g\in G(\conv_F(K))$ with $g\cdot a \in V$, $tp(g\cdot a/F)=p_i$ some $i$.

Moreover, we have a $F$-definable set $X\subseteq G$ with $a\in X$ satisfying the following: For  any $g\in G(\conv_F(K))$,  $g\cdot a \in X$ iff $\st(g)\in \Stab^\mu(p)$ iff $g\cdot a\models p$.
\end{lem}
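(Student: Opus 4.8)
The plan is to split the statement into a finiteness assertion (the first sentence) and the construction of the separating set $S$ (the ``moreover''), and to reduce both to the preceding proposition on definably connected components together with Fact~\ref{comp}. Throughout, fix $g\in G(\conv_F(K))$ and set $h=\st(g)$. First I would record two preliminary facts. By Lemma~\ref{lem:nbhd} we have $h\in G(F)$ and $g\in\mu(K)\cdot h$, say $g=\epsilon\cdot h$ with $\epsilon\in\mu(K)$, so that $g\cdot a=\epsilon\cdot(h\cdot a)$ exhibits $tp(g\cdot a/F)\sim_\mu tp(h\cdot a/F)$. Since $h\cdot a\models h\cdot p$ and translating a $\mu$-reduced type by an element of $G(F)$ preserves $\mu$-reducedness, the minimal dimension in the $\mu$-class of $g\cdot a$ equals $\dim(h\cdot p)=\dim(p)$; hence $\dim tp(g\cdot a/F)\ge\dim(p)$, while if moreover $g\cdot a\in V$ then $\dim tp(g\cdot a/F)\le\dim(V)=\dim(p)$, so $g\cdot a$ is itself $\mu$-reduced with Zariski closure exactly $V$. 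Secondly, as each coordinate of $g$ lies in $\conv_F(K)$ and each coordinate of $e$ lies in $F$, the valuations $v(g_i-e_i)$ are bounded below by some element of $\Gamma(F)$; choosing $\gamma\in\Gamma(F)$ below that bound gives $g\in G_{>\gamma,e}(K)$. Thus every relevant product lies in $G_{>\gamma,e}(K)\cdot a\cap V$ for some $\gamma\in\Gamma(F)$.

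For the finiteness assertion, let $T(\gamma)$ denote the set of types over $F$ realized in $G_{>\gamma,e}(K)\cdot a\cap V$ for $\gamma\in\Gamma(F)$. By the preceding proposition the type is constant on each definably connected component, so $|T(\gamma)|$ is at most the number of such components, and by Fact~\ref{comp} this number is uniformly bounded by some $N$ independent of $\gamma$. As $\gamma$ decreases the set $G_{>\gamma,e}(K)\cdot a$ grows, so the $T(\gamma)$ form a chain of subsets of $S_G(F)$ increasing in this direction, each of size at most $N$; hence their union $\{p_1,\dots,p_n\}$ is finite with $n\le N$, and it contains $p=tp(a/F)$ (take $g=e$). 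By the previous paragraph every $g\cdot a$ with $g\in G(\conv_F(K))$ and $g\cdot a\in V$ has its type in this union, which is the first assertion.

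For the ``moreover'', I would separate $p=p_1$ from the remaining types by a formula $\theta\in\mathcal{L}_\val(F)$ with $p\models\theta$ and $p_i\models\neg\theta$ for $i\ge2$, and set $S=G\cap V\cap\{\theta\}$, an $F$-definable subset of $G$ containing $a$. If $g\cdot a\in S$ then $g\cdot a\in V$, so by the finiteness assertion $tp(g\cdot a/F)$ is some $p_i$, and $g\cdot a\models\theta$ pins it to $p_1=p$; conversely $g\cdot a\models p$ gives $g\cdot a\in S$ since $p\models x\in S$. This proves $g\cdot a\in S\iff g\cdot a\models p$. The link with the stabilizer is then group-theoretic: if $g\cdot a\models p$ then $tp(g\cdot a/F)\sim_\mu p$, and writing $g=\epsilon\cdot h$ as above gives $tp(h\cdot a/F)\sim_\mu p$, i.e. $\st(g)\in\Stab^\mu(p)$. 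Applying this to all $g$ with $g\cdot a\in S$ yields $\st\big(S\cdot a^{-1}\cap G(\conv_F(K))\big)\subseteq\Stab^\mu(p)$, while Proposition~\ref{prop:contain} gives the reverse inclusion; together these identify $\Stab^\mu(p)$ with $\st\big(S\cdot a^{-1}\cap G(\conv_F(K))\big)$ and deliver the last equivalence.

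The main obstacle is the passage from the coarse $\mu$-class information back to the exact type: the relation $g\cdot a\sim_\mu a$ only places $tp(g\cdot a/F)$ in the $\mu$-class of $p$, and in general several distinct types share that class, so $\st(g)\in\Stab^\mu(p)$ does not by itself force $g\cdot a\models p$. The role of the finiteness assertion together with the separating formula $\theta$ is precisely to upgrade $\mu$-class membership to the exact type once one knows $g\cdot a\in V$; this is where $\mu$-reducedness of $g\cdot a$ (needed to invoke the connected-components proposition) and the hypothesis $g\cdot a\in V$ enter, and it is Proposition~\ref{prop:contain} that guarantees each element of $\Stab^\mu(p)$ is genuinely witnessed inside $S\cdot a^{-1}$ rather than merely lying in the standard part of it.
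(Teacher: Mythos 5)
Your proof is correct and follows essentially the same route as the paper's: finiteness comes from the constancy of types on definably connected components of $G_{>\gamma,e}(K)\cdot a\cap V$ together with the uniform bound of Fact~\ref{comp}, the set $S$ is cut out inside $G\cap V$ by a formula separating $p$ from the other $p_i$ (the paper just uses quantifier elimination to make $\theta$ explicit as valuative inequalities $v(f_{1j})>0$), and the stabilizer identification combines the implication $g\cdot a\in S\Rightarrow g\cdot a\models p\Rightarrow \st(g)\in\Stab^\mu(p)$ with the reverse inclusion from Proposition~\ref{prop:contain}. Your cosmetic variations (increasing-chain argument instead of pigeonhole, the preliminary $\mu$-reducedness of $g\cdot a$, and the explicit reading of the final ``iff'' as the set equality $\st\left(S\cdot a^{-1}\cap G(\conv_F(K))\right)=\Stab^\mu(p)$, which is exactly what the paper proves and uses) do not change the substance.
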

\begin{proof}
Take $m$ to be the maximum number of definably connected components of the definable family $(G_{>\gamma,e}(K)\cdot a \cap V)_{\gamma \in \Gamma}$ (see Fact~\ref{comp}). If the lemma does not hold for $m$, there exist $g_1,...,g_{m+1} \in G(\conv_F(K))$ such that $g_i\cdot a \in V$ and $tp(g_i\cdot a/F)\neq tp(g_j\cdot a/F)$ for $i\neq j$. Take $\gamma \in \Gamma(F)$ to be such that $g_i \in G_{>\gamma,e}(K)$ for all $i$. This contradicts Proposition~\ref{prop:type-iso}. 

For the ``moreover" part of the lemma, by quantifier elimination, for each $p_i\neq p_j$, there is a rational function $f_{ij}$, $p_i\models v(f_{ij})>0$ and $p_j\models v(f_{ij})\leq 0$. Without loss of generality, $p=p_1$, take $X$ to be defined by the conjunction of $G$ and the valuative inequalities $f_{1j}>0$ on $V$ for $j=2,...,m$. By Proposition~\ref{prop:contain}, $\Stab^\mu(p)\subseteq \st(X\cdot a^{-1}\cap G(\conv_F(K)))$. Conversely, assume that $g\cdot a\in X$ and $g\in G(\conv_F(K))$. By the construction of $X$, we see that $g\cdot a\models p$, so $\st(g)\cdot a \models \mu\cdot p$. Hence $\st(g)\in \Stab^\mu(p)$.
\end{proof}
The ``moreover" part above immediately translates to the following.
\begin{cor}\label{cor:def}
 Let $X$ be given as in Lemma~\ref{thm:bound}, then
 \begin{equation*}
     \st(Xa^{-1})\cap\conv_F(K)^n=\Stab^\mu(p).
 \end{equation*}
\end{cor}
We now proceed to prove Theorem~\ref{thm:main1}.

\begin{thm}\label{thm:infinite}
Let $G$ be a definable (valuatively) closed group in $\VF^n$. Let $p$ be an unbounded standard $G$-type:
\begin{enumerate}
    \item $\Stab^\mu(p)$ is definable.
    \item If $G$ is $g$-closed and the group operations are $v$+$g$-continuous, then $\Stab^\mu(p)$ is unbounded. In particular it is infinite.
\end{enumerate}
\end{thm}
\begin{proof}

For (1), let $X$ be as given in Lemma~\ref{thm:bound} and $a\in K$ be a realization of $p$. By Remark~\ref{rmk:bp}, we have that $F$ is purely stably embedded. Thus by Corollary~\ref{cor:def} and Remark~\ref{rmk:bp}, $\st(X\cdot a^{-1}\cap G(\conv_F(K)))$ is $F$-definable.

% x \in G \wedge y\in G\wedge z\in \Gamma \wedge \exists h \in G_{>z,e}\,\, h\cdot y\in X\cdot x^{-1}.
% \]
% Since standard types are definable by Remark~\ref{rmk:standard}, let $d_\psi(y,z)$ denote the $\psi$-definition of $p$. For each $g \in G$, there are two possibilities. 

% Case (A): $d_\psi(g,z)$ defines a set unbounded from above in $\Gamma(F)$. For any $\gamma\in \Gamma(F)$, one has $K\models \psi(a,g,\gamma)$. By o-minimality of $\Gamma$, we have some $\gamma'>\Gamma(F)$ such that $K\models \psi(a,g,\gamma')$. This means that there is $\epsilon\models \mu$ with $\epsilon \cdot g\in X\cdot a^{-1}$. In particular, $g\in \Stab^\mu(p)$.

% Case (B): $d_\psi(g,z)$ is bounded above by some $\gamma\in \Gamma(F)$, this means that $K\not\models \psi(a,g,\gamma)$. And in particular, $g\not \in \mu\cdot X\cdot a^{-1}$, so $g\notin \Stab^\mu(p)$. 

% Thus $\Stab^\mu(p)$ is the set of $g \in G(F)$ such that $d_\psi(g,z)$ is unbounded from above. This concludes the proof of the definability of $\Stab^\mu(p)$.

We now prove (2). Assume for a contradiction that $\Stab^\mu(p)$ is bounded, then there is some $\gamma\in \Gamma(F)$ such that $\Stab^\mu(p)\subseteq G_{>\gamma,e}(F)$. For a given $a\models p$ and $X$ be as before, we have the following:
\begin{equation*}
    X\cdot a^{-1}\cap G(\conv_F(K))\subseteq G_{>\gamma,e}(K).
\end{equation*}
Thus
\begin{equation*}
    X\cap G(\conv_F(K))\cdot a \subseteq G_{>\gamma,e}(K)\cdot a.
\end{equation*}
Particularly, we have 
\begin{equation*}
    X\cap G(\conv_F(K))\cdot a =X\cap  G_{>\gamma,e}(K)\cdot a.
\end{equation*}
For each $\gamma'\leq \gamma$ and $\gamma'\in \Gamma(F)$,
\begin{equation*}
    X\cap G(\conv_F(K))\cdot a =X\cap  (G_{>\gamma,e}(K)\cdot a)=X\cap (G_{\geq \gamma',e}(K)\cdot a).
\end{equation*}
By $g$-continuity of the group operation, we see that $X\cap (G_{>\gamma,e}(K)\cdot a)=X\cap (G_{\geq \gamma',e}(K)\cdot a)$ is $v$+$g$-clopen in $X$, hence it is a union of definably connected components of $X$. Since $X$ is $F$-definable, all the definably connected components of $X$ are definable over $\mathrm{acl}^{\mathrm{eq}}(F)$. Since $F$ is a model, it follows that the definably connected components of $X$ are $F$-definable. Thus $X\cap (G_{\geq \gamma',e}(K)\cdot a)$ is $F$-definable. However, using Fact~\ref{fact:bdd}, since  $G_{\geq \gamma',e}(K)$ is $v$+$g$-closed, bounded and the group operations are $v$+$g$-continuous, the set $X\cap (G_{\geq \gamma',e}(K)\cdot a)$ is bounded. A contradiction to the fact that $p$ is unbounded.
\end{proof}

Thus we have completed the proof of Theorem~\ref{thm:main1}. 
\subsection{Linear algebraic groups}\label{sec:lin-alg}
We will now look at the special case when $G$ is linear algebraic.

Given a proper standard extension $F\preceq K$, recall that $\conv_F(K)$ is a valuation ring of $K$ and $\Gamma(F)$ is a convex subgroup of $\Gamma(K)$. Let $v$ be the original valuation on $K$. The valuation given by $\conv_F(K)$ can be described via the following:
\begin{equation*}
    v': K\to  \left(\Gamma(K)/\Gamma(F)\right)_\infty,\,x\mapsto v(x)+\Gamma(F).
\end{equation*}
We let $K'$ denote the valued field structure on field $K$ given by the above valuation. From now on, we let $v$ denote the valuation on $K$ and $v'$ denote the valuation on $K'$. For each element $a\in K$, we use $a'$ to denote the same element but viewed as an element in $K'=(K,v')$.

In particular, $K'\models$ ACVF and the standard part map $\st$ can be viewed as the residue map in $K'$. Note that $K'$ can be turned into a model of $T_{loc}$ in \cite[Section 6]{HK3}, where $F$ is interpreted as the embedded residue field, so one can study the theory of $\mu$-stabilizers in the sense of~\cite{muACF}. When $G$ is linear algebraic, on the set level, we see that $G(K)=G(K')$, as the definition of $G$ only relies on the algebraic structure of $F$. Abusing notation, $G$ denotes the linear algebraic group in $K$ and $K'$ at the same time. Similarly, we let $S_K(F)$ to denote the types over $F$ where the ambient structure is $K$ and $S_{K'}(F)$ to denote the types in the $\mathcal{L}_{val}$ over $F$ in $K'$. The next proposition reduces Remark~\ref{rmk:lin-alg} to the main theorem in \cite{muACF} (See Fact~\ref{fact:ksy}).  

Let us recall the setting in~\cite{muACF}. In general, we consider an algebraically closed field $k$ as a trivially valued subfield of an algebraically closed valued field, where we have constant symbols for $k$ both in the $\RES$ and $\VF$-sort. Let $G$ be an linear algebraic group defined over $k$. Consider $G$ as a definable group in the $\VF$-sort and we use $S_G(k)$ to denote the (ACVF) types concentrating on $G$. One can define $\mu$ as the kernel of $G(\mathcal{O})\to G(\RES)$, and it induces a similar equivalence relation as $\mu$ in our setting, and one can develop the theory of $\mu$-stabilizers. We say a type $p\in S_G(k)$ is \textbf{residually algebraic} if there is $K\models\mathrm{ACVF}$, $a\in K$ and $a\models p$ such that $\res(K)=k$. $p$ is \textbf{centered at infinity} if $p$ does not concentrate on $G(\mathcal{O})$.

Next we exhibit the correspondence between unbounded types and types centered at infinity together with the correspondence of standard types and residually algebraic types.
Note that the following result is not a triviality since $K$ may not have enough automorphisms over $F$. 

\begin{prop}\label{prop:res-alg}
Let $a,b$ be tuples in $K$ such that $tp_K(a/F)=tp_K(b/F)=p\in S_K(F)$ is standard and unbounded. Then $tp_{K'}(a'/F)=tp_{K'}(b'/F)=p'\in S_{K'}(F)$, and $p'$ is centered at infinity and residually algebraic. Moreover,
for each $c'\in K$ with $c'\models p'\in S_{K'}(F)$, one has $c\models p  \in S_{K}(F)$.
\end{prop}
\begin{proof}
Let $V\subseteq \mathbb{A}^n$ be the Zariski closure of $p$ over $F$. Clearly the Zariski closure of $p$ over $F$ depends on the algebraic part of $p$ only. So we identify the Zariski closure of $p'$ with $V$. By quantifier elimination, to determine $tp_{K'}(a'/F)=tp_{K'}(b'/F)$, it suffices to determine for each rational function $f\in F(V)$, $v'(f(a'))> 0$ iff $v'(f(b'))> 0$. Let $f$ be a rational function on $V$. By the way $v'$ on $K'$ is defined, it follows that $v'(f(a'))>0$ iff $v(f(a))>\Gamma(F)$ iff $v(f(b))>\Gamma(F)$ iff $v'(f(b'))>0$. Since $\RES(K')=F$, $p'$ is residually algebraic. $p'$ is unbounded follows immediately from the definition. For the ``moreover" part, let $V$ be the Zariski closure of $c'$, to determine $tp_K(c/F)$, one needs to decide that for every $f\in F(V)$, $v(f(c))$ is greater than $0$ or not from the information in $tp_K'(c'/F)$. If $v'(f(c'))>0$, then clearly $v(f(c))>0$. Same for the case $<0$. If $v'(f(c'))=0$, since $p'$ is residually algebraic, there is $b'\in F$ such that $v'(f(c')-b')>0$. Then $v(f(c))=v(b)$. The completes the proof of the proposition.
\end{proof}
From the above we know that $\Stab^\mu(p)=\Stab^\mu(p')(F)$, where the right hand side is computed as in~\cite{muACF}. By Fact~\ref{fact:ksy}, we have the following.
\begin{cor}\label{cor:lin}
 If $p\in S_G(F)$ is $\mu$-reduced, standard and $G$ is linear algebraic, then $\Stab^\mu(p)\subseteq G$ is a solvable algebraic subgroup with $\dim(\Stab^\mu(p))=\dim(p)$.
\end{cor}
\begin{proof}
 If $p$ is unbounded, this follows exactly the statement of ~\cite[Theorem 1.2]{muACF}. When $p$ is bounded and $\mu$-reduced and residually algebraic, $p$ is realized so $\Stab^\mu(p)=\{e\}$ and the statement holds trivially.
\end{proof}
\section{Examples}
In this section, we compute some concrete examples of $\mu$-stabilizers.
 \begin{eg}\label{ex:basic1}
  Let \(G=\mathrm{SL}_{2,F}\). Let
  \[
  X_1=\left\{\begin{pmatrix}
    x & 1 \\
    0 & x^{-1}
  \end{pmatrix} : {x\in F^* }\right\}
  \]
  be a closed subvariety of \(G\). Since this subvariety is isomorphic (as an algebraic variety) to \(\mathbb{G}_m\) via
  \[
  \begin{pmatrix}
    x & 1 \\
    0 & x^{-1}
    \end{pmatrix} \mapsto x,
   \] 
   the definable subset given by \( v(x)< \Gamma(F)\) isolates a complete type \(p\) on \(G\).

  We claim that the $\mu$-stabilizer \(H\) of $p$ is the subgroup
  \[
    \left\{
    \begin{pmatrix}
      1 & a \\
      0 & 1
    \end{pmatrix}: a\in F
    \right\}.
  \]

  Indeed, let \(g\in F\) and choose $\alpha\in K$ with $v(\alpha)<\Gamma(F)$.  
  Then
  \[
  \begin{pmatrix}
    1 & g \\ 0  & 1 
  \end{pmatrix}
  \begin{pmatrix}
    \alpha & 1 \\ 0 & \alpha^{-1} 
  \end{pmatrix}=
  \begin{pmatrix}
    \alpha & 1+g\alpha^{-1} \\
    0 & \alpha^{-1}
  \end{pmatrix}=
  \begin{pmatrix}
    1+\varepsilon & 0 \\
    0 & (1+\varepsilon)^{-1}
  \end{pmatrix}
  \begin{pmatrix}
    \beta & 1 \\ 0 & \beta^{-1}
  \end{pmatrix},\]
  where $\varepsilon =g \alpha^{-1}$ and
  $\beta=(1+\varepsilon)^{-1}\alpha$.
  Since \(v(\varepsilon)>\Gamma(F)\), we have
 \[ \begin{pmatrix}
    1+\varepsilon & 0 \\
    0 & (1+\varepsilon)^{-1}
  \end{pmatrix}\in \mu
  \]and 
 \[
 \begin{pmatrix}
   \beta & 1 \\
   0 & \beta^{-1}
 \end{pmatrix}\models{}p.
\]
So \(H\subseteq \Stab^\mu(p)\). However, $\Stab^\mu(p)$ is $1$-dimensional by~Corollary~\ref{cor:lin}, and a similar computation concludes that $H=\Stab^\mu(p)$. This example is to be compared with~\cite[Example 2.10]{muACF} to illustrate the correspondence between standard types and residually algebraic types via the $\conv_F(K)$ construction (Proposition~\ref{prop:res-alg}), as well as the fact that it preserves the $\mu$-stabilizers.
\end{eg}
\begin{eg}\label{ex:basic2}
On the other hand, it is not necessarily the case that any definable group in the valued field sort is an algebraic group. For example, let $G$ be the subgroup of $\mathrm{GL}_{2,F}$ with the condition that the determinant has valuation $0$. Consider the closed subvariety of $G$ given by
 \[
  X_2=\left\{\begin{pmatrix}
    x & 0 \\
    1 & x^{-1}
  \end{pmatrix} :x \in F^* \right\}.
 \]
Even though the above group $G$ is not algebraic, consider the same type isolated by $v(x)<\Gamma(F)$, a similar computation as~\cite[Example 2.11]{muACF} yields that $\Stab^\mu(p)$ is the following group:
\[
\left\{ \begin{pmatrix}
    x & 0 \\
    0 & x^{-1}
  \end{pmatrix}: x\in F^*\right\}.
\]
In this case, since $p$ has dimension $1$, we trivially have that $\dim(p)=\dim(\Stab^\mu(p))$. Comparing to Example~\ref{ex:basic1}, the fact that $X_2$ have a very different $\mu$-stabilizer from $X_1$ is related to the fact that left and right $\mu$-stabilizers are usually different even for the same type.

In general, higher dimensional $\mu$-stabilizers are hard to compute and we end with the following question.
\end{eg}
\begin{question}
Can Theorem~\ref{thm:infinite} be strengthened to the following: For a $\mu$-reduced standard type $p$, is it true that $\dim(p)=\dim(\Stab^\mu(p))$? More optimistically, is it true that any $\Stab^\mu(p)$ for any standard $p$ is algebraic?
\end{question}

\bibliographystyle{alpha}
\bibliography{bibliography}

\begin{thebibliography}{vdDL95}

\bibitem[Ber90]{Ber}
Vladimir~G. Berkovich.
\newblock {\em Spectral theory and analytic geometry over non-{A}rchimedean
  fields}, volume~33 of {\em Mathematical Surveys and Monographs}.
\newblock American Mathematical Society, Providence, RI, 1990.

\bibitem[CHY21]{BP}
Pablo {Cubides Kovacsics}, Martin Hils, and Jinhe Ye.
\newblock Beautiful pairs.
\newblock {\em arXiv preprint arXiv:2112.00651}, 2021.

\bibitem[CKD16]{delon}
Pablo Cubides~Kovacsics and Fran\c{c}oise Delon.
\newblock Definable types in algebraically closed valued fields.
\newblock {\em MLQ Math. Log. Q.}, 62(1-2):35--45, 2016.

\bibitem[CKY21]{pair-pro}
Pablo Cubides~Kovacsics and Jinhe Ye.
\newblock Pro-definability of spaces of definable types.
\newblock {\em Proc. Amer. Math. Soc. Ser. B}, 8:173--188, 2021.

\bibitem[HHM08]{HHM}
Deirdre Haskell, Ehud Hrushovski, and Dugald Macpherson.
\newblock {\em Stable domination and independence in algebraically closed
  valued fields}, volume~30 of {\em Lecture Notes in Logic}.
\newblock Association for Symbolic Logic, Chicago, IL; Cambridge University
  Press, Cambridge, 2008.

\bibitem[HK09]{HK3}
Ehud Hrushovski and David Kazhdan.
\newblock Motivic {P}oisson summation.
\newblock {\em Mosc. Math. J.}, 9(3):569--623, back matter, 2009.

\bibitem[HL16]{HL}
Ehud Hrushovski and Fran{\c{c}}ois Loeser.
\newblock {\em Non-Archimedean Tame Topology and Stably Dominated Types
  (AM-192)}.
\newblock Princeton University Press, 2016.

\bibitem[JY22]{WY_padic}
Will Johnson and Ningyuan Yao.
\newblock On non-compact {$p$}-adic definable groups.
\newblock {\em J. Symb. Log.}, 87(1):188--213, 2022.

\bibitem[KSY21]{muACF}
Moshe Kamensky, Sergei Starchenko, and Jinhe Ye.
\newblock {Peterzil-Steinhorn subgroups and $\mu$-stabilizers in ACF}.
\newblock {\em to appear in Journal of the Institute of Mathematics of
  Jussieu}, 2021+.

\bibitem[Pil94]{Pillay_def}
Anand Pillay.
\newblock Definability of types, and pairs of {O}-minimal structures.
\newblock {\em J. Symbolic Logic}, 59(4):1400--1409, 1994.

\bibitem[PS99]{Kobi}
Ya'acov Peterzil and Charles Steinhorn.
\newblock Definable compactness and definable subgroups of o-minimal groups.
\newblock {\em J. London Math. Soc. (2)}, 59(3):769--786, 1999.

\bibitem[PS17]{Ser}
Ya'acov Peterzil and Sergei Starchenko.
\newblock Topological groups, {$\mu$}-types and their stabilizers.
\newblock {\em J. Eur. Math. Soc. (JEMS)}, 19(10):2965--2995, 2017.

\bibitem[Sam48]{sam}
Pierre Samuel.
\newblock Ultrafilters and compactification of uniform spaces.
\newblock {\em Trans. Amer. Math. Soc.}, 64:100--132, 1948.

\bibitem[vdD14]{Lou}
Lou van~den Dries.
\newblock Lectures on the model theory of valued fields.
\newblock In {\em Model Theory in Algebra, Analysis and Arithmetic}, pages
  55--157. Springer, 2014.

\bibitem[vdDL95]{vdd-lewen-1}
Lou van~den Dries and Adam~H. Lewenberg.
\newblock {$T$}-convexity and tame extensions.
\newblock {\em J. Symbolic Logic}, 60(1):74--102, 1995.

\bibitem[Yin10]{Yimu}
Yimu Yin.
\newblock Special transformations in algebraically closed valued fields.
\newblock {\em Ann. Pure Appl. Logic}, 161(12):1541--1564, 2010.

\end{thebibliography}
\end{document}